\setlist{noitemsep,nolistsep,leftmargin=1.7em}
\DeclareFontFamily{U}{mathx}{\hyphenchar\font45}
\DeclareFontShape{U}{mathx}{m}{n}{
      <5> <6> <7> <8> <9> <10>
      <10.95> <12> <14.4> <17.28> <20.74> <24.88>
      mathx10
      }{}
\DeclareSymbolFont{mathx}{U}{mathx}{m}{n}
\DeclareMathSymbol{\bigtimes}{1}{mathx}{"91}
\def\s{\mathfrak{s}}
\definecolor{darkred}{rgb}{0.7,0.1,0.1}
\definecolor{darkblue}{rgb}{0.1,0.1,0.8}
\definecolor{darkgreen}{rgb}{0.1,0.7,0.1}
\providecommand{\figures}{false}
{ \ifthenelse{\equal{\figures}{false}} {#1}{\[ {\rm Figure \ missing !} \]} }{}
\def\id{\mathrm{id}}
\def\CH{\mathcal{H}}
\def\CJ{\mathcal{J}}
\def\CA{\mathcal{A}}
\def\CC{\mathcal{C}}
\def\CB{\mathcal{B}}
\def\CM{\mathcal{M}}
\def\CT{\mathcal{T}}
\def\K{\mathfrak{K}}
\def\Labe{\mathfrak{e}}
\def\Labn{\mathfrak{n}}
\def\Labhom{\mathfrak{t}}
\def\Lab{\mathfrak{L}}
\def\Deltap{\Delta^{\!+}}
\def\${|\!|\!|}
\newcommand*\bigcdot{\mathpalette\bigcdot@{.5}}
\newcommand*\bigcdot@[2]{\mathbin{\vcenter{\hbox{\scalebox{#2}{$\m@th#1\bullet$}}}}}
\def\CR{\mathcal{R}}
\def\CF{\mathcal{F}}
\newenvironment{DIFnomarkup}{}{} 
\newtheorem{assumption}{Assumption}
\newfont{\indic}{bbmss12}
\def\PPi{\boldsymbol{\Pi}}
\def\mail#1{\burlalt{#1}{mailto:#1}}
\colorlet{symbols}{blue!90!black}
\colorlet{testcolor}{green!60!black}
\colorlet{connection}{red!30!black}
\def\symbol#1{\textcolor{symbols}{#1}}
\def\symbol#1{\textcolor{symbols}{#1}}
\tikzset{
root/.style={circle,fill=black!50,inner sep=0pt, minimum size=3mm},
        circ/.style={circle,fill=white,draw=black,very thin,inner sep=.5pt, minimum size=1.2mm},
        dot/.style={circle,fill=black,inner sep=0pt, minimum size=1.2mm},
        dotred/.style={circle,fill=black!50,inner sep=0pt, minimum size=2mm},
        var/.style={circle,fill=black!10,draw=black,inner sep=0pt, minimum size=3mm},
        kernel/.style={semithick,shorten >=2pt,shorten <=2pt},
        kernel1/.style={thick},
        kernels/.style={snake=zigzag,shorten >=2pt,shorten <=2pt,segment amplitude=1pt,segment length=4pt,line before snake=2pt,line after snake=5pt,},
		kernels1/.style={snake=zigzag,segment amplitude=0.5pt,segment length=2pt},
		rho1/.style={densely dotted,semithick},
        rho/.style={densely dashed,semithick,shorten >=2pt,shorten <=2pt},
           testfcn/.style={dotted,semithick,shorten >=2pt,shorten <=2pt},
        renorm/.style={shape=circle,fill=white,inner sep=1pt},
        labl/.style={shape=rectangle,fill=white,inner sep=1pt},
        xic/.style={very thin,circle,fill=symbols,draw=black,inner sep=0pt,minimum size=1.2mm},
        xi/.style={very thin,circle,fill=blue!10,draw=black,inner sep=0pt,minimum size=1.2mm},
	xib/.style={very thin,circle,fill=blue!10,draw=black,inner sep=0pt,minimum size=1.6mm},
	xie/.style={very thin,circle,fill=green!50!black,draw=black,inner sep=0pt,minimum size=1mm},
	xid/.style={very thin,circle,fill=symbols,draw=black,inner sep=0pt,minimum size=1.6mm},
	edgetype/.style={very thin,circle,draw=black,inner sep=0pt,minimum size=5mm},
	nodetype/.style={very thick,circle,draw=black,inner sep=0pt,minimum size=5mm},
	kernels2/.style={very thick,draw=connection,segment length=12pt},
clean/.style={thin,circle,fill=black,inner sep=0pt,minimum size=1mm},	not/.style={thin,circle,fill=symbols,draw=connection,fill=connection,inner sep=0pt,minimum size=0.8mm},
	>=stealth,
        }
\tikzset{ individus/.style={scale=0.40,draw,circle,thick,fill=black!10},
 individu/.style={scale=0.40,draw,circle,thick,fill=black!50},       } 
\def\DeclareSymbol#1#2#3{\expandafter\gdef\csname MH@symb@#1\endcsname{\tikz[baseline=#2,scale=0.15,draw=symbols]{#3}}\expandafter\gdef\csname MH@symb@#1s\endcsname{\scalebox{0.7}{\tikz[baseline=#2,scale=0.15,draw=symbols]{#3}}}}
\def\<#1>{\csname MH@symb@#1\endcsname}
 \def\1{\mathbf{\symbol{1}}}
\def\one{\mathbf{1}}
\def\eps{\varepsilon}
\DeclareMathAlphabet{\mathpzc}{OT1}{pzc}{m}{it}
\def\one{\mathbf{1}}
\def\eps{\varepsilon}
\def\Deltap{\Delta^{\!+}}
\def\PPi{\boldsymbol{\Pi}}
\def\id{\mathrm{id}}
\def\simnot{\stackrel{\vbox to 0.15em{\hbox{\kern0.07em$^\circ$}}}{\sim}}
\begin{document}

\title{Bogoliubov type recursions for\\ renormalisation in regularity structures}

\author{Yvain Bruned$^1$, Kurusch Ebrahimi-Fard$^2$}
\institute{ 
 IECL (UMR 7502), Université de Lorraine
 \and Norwegian University of Science and Technology NTNU \\
Email:\ \begin{minipage}[t]{\linewidth}
\mail{yvain.bruned@univ-lorraine.fr},
\\
\mail{kurusch.ebrahimi-fard@ntnu.no}.
\end{minipage}}

\maketitle

\begin{abstract}
Hairer's regularity structures transformed the solution theory of singular stochastic partial differential equations. The notions of positive and negative renormalisation are central and the intricate interplay between these two renormalisation procedures is captured through the combination of cointeracting bialgebras and an algebraic Birkhoff-type decomposition of bialgebra morphisms. This work revisits the latter by defining Bogoliubov-type recursions similar to Connes and Kreimer's formulation of BPHZ renormalisation. We then apply our approach to the renormalisation problem for SPDEs.
\end{abstract}

\setcounter{tocdepth}{2}
\tableofcontents


\section{Introduction}
\label{sec::1}

The theory of Regularity Structures (RS) has been developed to its full generality within a few years since its initial presentation by Hairer \cite{reg}. Thanks to recent progress, one has local well-posedness results for a large class of singular Stochastic Partial Differential Equations (SPDEs). This achievement relies, among others, on the following papers \cite{BHZ,ajay,BCCH}. It culminated in the construction of a natural random dynamic on the space of loops in a Riemannian manifold described in \cite{BGHZ} and the Langevin dynamic for the 2D Yang--Mills measure in \cite{CCHS}. Friz and Hairer \cite{FrizHai} present a textbook introduction to RS and \cite{EMS,BaiHos} give short respectively extended surveys on these developments. The algebraic foundation of the theory has been developed in \cite{BHZ}, where two renormalisation procedures are shown to be in cointeraction: the first recenters distributions around a point such that they can be understood as recentered monomials. The second renormalisation cures divergences coming from ill-defined distributional products in a singular SPDE. In the abstract of reference \cite{BHZ}, the key parts of these renormalisations have been highlighted: ``Two twisted antipodes play a fundamental role in the construction and provide a variant of the algebraic Birkhoff factorisation".

The main contribution of this paper is to make this link more precise and to explore the extent to which Bogoliubov's recursions and hence the algebraic Birkhoff factorisation are altered in the context of the renormalisation problem of SPDEs. We establish a new Birkhoff factorisation different from the one introduced by Connes-Kreimer in \cite{CKI}. Birkhoff factorisations are essential objects that appear in many different fields such as numerical analysis where it is used in \cite{BS}  for the   local error analysis of low regularity schemes for dispersive PDEs. We expect to see them used again in the context of (S)PDEs and this work provides new tools in this direction.
   
Let us outline the paper by summarising the content of its sections. In Section~\ref{sec::2}, we recall the algebraic Birkhoff factorisation following Connes and Kreimer \cite{CKI}, which unveiled an elegant group theoretical formulation of the BPHZ renormalisation procedure in perturbative quantum field theory \cite{BP,Hepp,Zimmermann}. We refer the reader to  \cite{CasKen82,Collins84,Zavialov,panzer} for useful references on renormalisation in perturbative quantum field theory. Bogoliubov's recursions for counterterms and renormalised amplitudes are characterised as solutions of a factorisation problem in the group of characters over a (specific Feynman graph or rooted tree) Hopf algebra. Then we introduce a factorisation-type renormalisation with the coproduct replaced by a coaction. The counterterm recursion is defined via a comodule structure. These two structures are considered for a connected Hopf algebra. We introduce also Taylor-jet operators forming a (family of) Rota--Baxter map(s), which is central for the Bogoliubov recursions to solve the factorisation problem. Section~\ref{sec::Birkhoff} contains the new and important results. We consider decorated trees as they appear in regularity structures and define a comodule-Hopf algebra structure on them. This structure has been originally introduced in \cite{BHZ}. We present its algebraic construction and postpone the link to SPDEs to the next section. The main difficulty lies in the fact that the Hopf algebra at play is not connected. Therefore, the results presented in Section~\ref{sec::2} cannot be applied directly if one wants to set up a Birkhoff-type factorisation. This problem is circumvented by defining a modified reduced coproduct and use a family of Rota--Baxter maps in order to give one of the main definitions of this paper (Definition~\ref{Bogoliubov}). Then, by exploiting the Rota--Baxter property one can show the main result (Theorem \ref{main theorem}). It is also shown that under certain assumptions, the recentering map does not depend on a priori recentering of the polynomials. These results have to be understood as an alternative way of defining the notion of model, which forms a critical part in Hairer's theory \cite{reg}. In Section~\ref{sec::4}, we present two Birkhoff factorisations connected to singular SPDEs. The first one concerns the construction of the recentering map which crucially relies on the main result of Section~\ref{sec::Birkhoff}. Then, we present the notion of negative renormalisation, which is close in spirit to the approach outlined in Section~\ref{sec::2}, where the comodule structure is used.


\section{Algebraic Birkhoff factorisation}
\label{sec::2}

Connes and Kreimer discovered a Hopf algebraic formulation of the renormalisation process in perturbative quantum field theory \cite{CKI}. It permits to capture the so-called BPHZ subtraction method  \cite{Collins84,Zavialov}  in terms of an algebraic Birkhoff decomposition of Feynman rules seen as an element in the group of Hopf algebra characters. The factors in this decomposition give the renormalised respectively counterterm parts. 

The notion of renormalisation in regularity structures permits as well a Hopf algebraic formulation. However, the tree Hopf algebras at play are not necessarily connected. Moreover, renormalisation can not be described as decomposition at a group theoretical level. Instead, we will have to consider a variant of this approach using a comodule structure.

\smallskip

Let $H=\C \mathbf{1} \oplus \bigoplus_{n>0} H^{(n)}$ be a connected graded Hopf algebra over $ \C $ with coproduct $ \Delta : H \to H \otimes H  $, antipode $\CA : H \to H $ and counit $\mathbf{1}^* : H \to \C$. Recall that the latter is a linear map which equals $1$ on $ \one$ and zero else. In the sequel, we will use Sweedler's notation in order to describe the coproduct, $\Delta$, as well as the corresponding reduced coproduct, $\Delta\!'$:
\allowdisplaybreaks
\begin{equs}
	 \Delta \tau = \sum_{(\tau)} \tau^{(1)} \otimes \tau^{(2)} 
	 		 &= \Delta\!' \tau + \tau \otimes \one + \one \otimes \tau \\
			 &= \sideset{}{\!^{'}}\sum_{(\tau)} \tau' \otimes \tau'' + \tau \otimes \one + \one \otimes \tau.
\end{equs}

\begin{remark} \label{rmk:BCK}
Let $\mathcal{RT}$ denote the set of non-planar rooted trees and $\mathcal{T}:=\langle \mathcal{RT}\rangle$ is the corresponding space. A tree is naturally graded by its number of vertices. The Butcher--Connes--Kreimer Hopf algebra of rooted trees, $H_\mathcal{T}$, provides a key example of a connected graded Hopf algebra -- of combinatorial nature. It plays an important role in the theory of Butcher's B-series \cite{MR2657947} in numerical analysis. Connes and Kreimer studied $H_\mathcal{T}$ in great detail in the context of renormalisation in perturbative quantum field theory \cite{CK}. They defined its coproduct using the notion of admissible cuts on rooted trees. Moreover, they also described a recursive formula based on the fact that any rooted tree $\tau \in \mathcal{T}$, different from the empty tree, $\one$, can be written in terms of the $B_+$-operator. Indeed, in terms of the latter, we have that $\tau = B_+(\tau_1 \cdots \tau_n)$, which connects the roots of the trees in the forest $\tau_1 \cdots \tau_n \in H_\mathcal{T}$ to a new root. The coproduct on $H_\mathcal{T}$ then satisfies the relation
\begin{equs}
\label{coprodrecur}
	\Delta_{\scriptscriptstyle\mathrm{CK}} (\tau) 
	= \mathbf{1} \otimes \tau + (B_+ \otimes \mathrm{id})\Delta_{\scriptscriptstyle\mathrm{CK}}(\tau_1 \cdots \tau_n).
\end{equs}
\end{remark}

\medskip

We denote by $ \text{char}(H,A) $ the set of characters from the Hopf algebra $H $ into a commutative unital $ \C $-algebra, $ A $. These are linear algebra morphisms forming a group with respect to the convolution product
\begin{equs}
\label{convolution}
	\psi \star \phi := m_A (\psi \otimes \phi ) \Delta
	\qquad
    	H \xrightarrow{\Delta} H \otimes H \xrightarrow{\psi \otimes \phi} 
    	A \otimes A \xrightarrow{m_{A}} A.
\end{equs}

The convolution inverse of a character, $\phi \in \text{char}(H,A) $, is given through composition with the antipode, i.e., $\varphi^{-1} = \varphi \circ \CA $, and the unit for the convolution product is the co-unit $\mathbf{1}^*$ \footnote{For the sake of notational transparency, we'll suppress here the unit-map, $\eta: A \to H$, that should follow the co-unit.}.

It is furthermore assumed that a linear projection, $ Q : A \rightarrow A$, is defined on $A$, which satisfies the (weight $ -1 $) Rota--Baxter identity:
\begin{equs} 
\label{Rota_Baxter}
	Q( f) \bigcdot_{\!\!\scriptscriptstyle A} Q(g)  
	= Q \left( Q(f) \bigcdot_{\!\!\scriptscriptstyle A} \, g
			+ f \bigcdot_{\!\!\scriptscriptstyle A} \, Q(g) 
			 \right) - Q(f \bigcdot_{\!\!\scriptscriptstyle A} \, g)
\end{equs}
for any $ f,g \in A $. Here, $f \bigcdot_{\!\!\scriptscriptstyle A} \, g:=m_A (f \otimes g ) $ denotes the commutative product of $ f $ and $ g $ in the algebra $ A $. The associated projector, $\tilde{Q}:=\id_{A} - Q$, also satisfies identity \eqref{Rota_Baxter}. As a result, $A$ splits into two subalgebras $A_- := Q(A)$ and $ A_+ :=\tilde{Q}(A)$:
\begin{equs}
	A = A_- \oplus A_+.
\end{equs}

\begin{remark} 
One of the main examples is given by the algebra of Laurent series, $A = \C[[t,t^{-1}]$, with finite pole-part. In this context $A_-  =  t^{-1}\C[t^{-1}]$ and $A_+ =  \C[[t]]$, such that $ Q $ keeps only the pole part of a series: 
\begin{equs}
	Q\Big( \sum_{n} a_n t^{n} \Big)  = \sum_{ n< 0} a_n t^{n} \in A_-.
\end{equs}
\end{remark}

\medskip

The Hopf algebraic approach of Connes and Kreimer \cite{CKI} describes the so-called BPHZ renormalisation method in perturbative quantum field theory in terms of a factorisation theorem for -- dimensionally regularised-- Hopf algebra characters. We now recall Connes and Kreimer's so-called algebraic Birkhoff decomposition.

\begin{proposition} \label{classicalBirkhoff}
For every character $ \varphi \in \text{char}(H,A) $, there exist unique algebra morphisms $ \varphi_- : H \rightarrow A_- $ and $ \varphi_+ : H \rightarrow A_+ $ defined in terms of the recursions 
\allowdisplaybreaks
\begin{align}
\label{RBHopfBPHZ}
 	\varphi_-  &= \mathbf{1}^* - Q\big( (\varphi - \mathbf{1}^* ) \star  \varphi_- \big)\\
	\varphi_+ &= \mathbf{1}^* + \tilde{Q}\big( (\varphi - \mathbf{1}^* ) \star  \varphi_- \big)
\end{align}  
and yielding the algebraic Birkhoff factorisation 
\begin{equation}
\label{BPHZfact}
	 \varphi_+ \star \varphi^{-1}_- = \varphi.
\end{equation}
\end{proposition}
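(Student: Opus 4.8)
The plan is to proceed in four steps: well-posedness of the recursions \eqref{RBHopfBPHZ}, the factorisation identity \eqref{BPHZfact}, multiplicativity of $\varphi_-$ (the step where the Rota--Baxter relation \eqref{Rota_Baxter} really enters), and uniqueness. For the first, note that since $H$ is connected graded the reduced coproduct sends $H^{(n)}$ into $\bigoplus_{0<k<n}H^{(k)}\otimes H^{(n-k)}$; writing $(\varphi-\mathbf{1}^*)\star\varphi_-$ in Sweedler notation and using $(\varphi-\mathbf{1}^*)(\one)=0$, the value $\varphi_-(\tau)$ for $\tau\in H^{(n)}$ is determined by $\varphi$ together with the values of $\varphi_-$ on strictly lower degrees; with $\varphi_-(\one)=1$ this defines $\varphi_-$, and then $\varphi_+$, uniquely as linear maps, with $\varphi_-(\one)=\varphi_+(\one)=1$ and with $\varphi_-$ resp. $\varphi_+-\mathbf{1}^*$ valued in $A_-=Q(A)$ resp. $A_+=\tilde Q(A)$ on the augmentation ideal (the intended reading of the target spaces in the statement).

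\emph{Factorisation.} Set $\bar\varphi:=(\varphi-\mathbf{1}^*)\star\varphi_-$. From \eqref{RBHopfBPHZ} we have $\varphi_-=\mathbf{1}^*-Q(\bar\varphi)$ and $\varphi_+=\mathbf{1}^*+\tilde Q(\bar\varphi)$, and since $\tilde Q=\id-Q$ this gives $\varphi_+=\varphi_-+\bar\varphi=\varphi_-+\varphi\star\varphi_--\mathbf{1}^*\star\varphi_-=\varphi\star\varphi_-$. As $\varphi_-$ is unital it is invertible for the convolution product on the connected graded bialgebra $H$ (its inverse is the locally finite series $\sum_{k\ge 0}(\mathbf{1}^*-\varphi_-)^{\star k}$), so $\varphi_+=\varphi\star\varphi_-$ rearranges to \eqref{BPHZfact}. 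Moreover, once $\varphi_-$ is known to be a character, $\varphi_+=\varphi\star\varphi_-$ is a convolution product of characters into the commutative algebra $A$, hence again a character; thus it only remains to prove multiplicativity of $\varphi_-$.

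\emph{Multiplicativity --- the main step.} I would show $\varphi_-(xy)=\varphi_-(x)\,\varphi_-(y)$ by induction on $\deg x+\deg y$, the case where one factor equals $\one$ being immediate. For homogeneous $x,y$ in the augmentation ideal with $\deg x+\deg y=n$, expand $\bar\varphi(xy)=m_A\big((\varphi-\mathbf{1}^*)\otimes\varphi_-\big)\big(\Delta x\cdot\Delta y\big)$ into its nine Sweedler contributions; using that $\varphi$ and $\mathbf{1}^*$ are characters and that the inductive hypothesis applies to every product appearing as a $\varphi_-$-argument (each of total degree strictly below $n$), these collect into the compact identity $\bar\varphi(xy)=\varphi_+(x)\,\varphi_+(y)-\varphi_-(x)\,\varphi_-(y)$. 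Now apply $-Q$: by the Rota--Baxter identity \eqref{Rota_Baxter} the subspaces $A_+$ and $A_-$ are subalgebras, so $\varphi_+(x)\varphi_+(y)\in A_+=\ker Q$ while $\varphi_-(x)\varphi_-(y)\in A_-$, on which $Q$ acts as the identity; hence $\varphi_-(xy)=-Q(\bar\varphi(xy))=\varphi_-(x)\varphi_-(y)$. I expect this combination of Sweedler bookkeeping with the Rota--Baxter cancellation to be the only delicate point; everything else is formal.

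\emph{Uniqueness.} Suppose $\varphi=\psi_+\star\psi_-^{-1}$ with $\psi_\pm$ algebra morphisms, $\psi_\pm$ unital, $\psi_--\mathbf{1}^*$ valued in $A_-$ and $\psi_+-\mathbf{1}^*$ valued in $A_+$. Then $\psi_+=\varphi\star\psi_-$, so $\psi_+-\psi_-=(\varphi-\mathbf{1}^*)\star\psi_-$; by induction on the grading the right-hand side agrees with $\bar\varphi$ (its value at $\tau\in H^{(n)}$ involves $\psi_-$ only on degrees $<n$, where $\psi_-=\varphi_-$), and decomposing $\bar\varphi(\tau)$ along $A=A_-\oplus A_+$ forces $\psi_-(\tau)=-Q(\bar\varphi(\tau))=\varphi_-(\tau)$ and $\psi_+(\tau)=\tilde Q(\bar\varphi(\tau))+\mathbf{1}^*(\tau)=\varphi_+(\tau)$, i.e. $\psi_\pm=\varphi_\pm$.
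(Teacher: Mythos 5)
Your proof is correct, and it is worth noting that the paper does not actually prove Proposition~\ref{classicalBirkhoff}: it is recalled from Connes--Kreimer, and the only justification offered is the remark after Definition~\ref{Def:BogoPrep}, which obtains \eqref{BPHZfact} by writing the recursion $\varphi_+^{-1} = \mathbf{1}^* - \tilde Q\big(\varphi_+^{-1}\star(\varphi-\mathbf{1}^*)\big)$ and using the Rota--Baxter identity to collapse $\varphi_+^{-1}\star\varphi_-$ to $\mathbf{1}^* - \varphi_+^{-1}\star(\varphi-\mathbf{1}^*)\star\varphi_-$, whence $\varphi_+^{-1}\star\varphi\star\varphi_- = \mathbf{1}^*$. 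Your route to the factorisation is more elementary and arguably cleaner: since $\tilde Q = \id_A - Q$, one has $\varphi_+ - \varphi_- = \tilde Q(\bar\varphi) + Q(\bar\varphi) = \bar\varphi$, hence $\varphi_+ = \varphi\star\varphi_-$ with no Rota--Baxter input at all; the identity \eqref{Rota_Baxter} is then confined to the multiplicativity step, where it genuinely belongs (it is exactly what makes $A_\pm$ subalgebras, so that $Q$ kills $\varphi_+(x)\varphi_+(y)$ and fixes $\varphi_-(x)\varphi_-(y)$). Your key identity $\bar\varphi(xy)=\varphi_+(x)\varphi_+(y)-\varphi_-(x)\varphi_-(y)$ does check out: writing $\bar\varphi(xy)=\sum\big(\varphi(x^{(1)})\varphi(y^{(1)})-\mathbf{1}^*(x^{(1)})\mathbf{1}^*(y^{(1)})\big)\varphi_-(x^{(2)})\varphi_-(y^{(2)})$ and refactoring by commutativity of $A$ gives $(\varphi\star\varphi_-)(x)(\varphi\star\varphi_-)(y)-\varphi_-(x)\varphi_-(y)$. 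The one point you should state explicitly is that the single Sweedler component whose $\varphi_-$-argument is the full-degree product $xy$ (namely $x^{(1)}=y^{(1)}=\one$) is \emph{not} covered by the induction hypothesis, but it carries the coefficient $(\varphi-\mathbf{1}^*)(\one)=0$ and so drops out --- your phrasing "the inductive hypothesis applies to every product appearing as a $\varphi_-$-argument" is slightly too strong as written. The uniqueness argument via the direct-sum decomposition of $\bar\varphi(\tau)$ along $A=A_-\oplus A_+$ is the standard one and is fine under the reading (which you flag) that $\varphi_\pm-\mathbf{1}^*$ take values in $A_\pm$ on the augmentation ideal.
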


\begin{definition}\label{Def:BogoPrep}
The map $\bar \varphi = (\varphi - \mathbf{1}^* ) \star  \varphi_- $ is called Bogoliubov's preparation map. The maps $\varphi_-$ and $\varphi_+$ are called counterterm respectively renormalised character.  
\end{definition}

\begin{remark} 
We note that  $\varphi^{-1}_+ = \varphi_+ \circ \CA $ can be computed recursively
$$
	\varphi^{-1}_+ =  \mathbf{1}^* - \tilde{Q}\big( \varphi^{-1}_+ \star  (\varphi - \mathbf{1}^* ) \big).
$$
Identity \eqref{Rota_Baxter} then implies that $\varphi^{-1}_+  \star \varphi_- = \mathbf{1}^* - \varphi^{-1}_+ \star (\varphi - \mathbf{1}^* ) \star \varphi_- $, from which \eqref{BPHZfact} follows immediately.  
\end{remark}

Observe that $\varphi_{+} =  \varphi \star \varphi_{-} =  m_A \left( \varphi \otimes \varphi_{-} \right) \Delta$ and evaluating on an element $ \tau \in H $ different from $\mathbf{1}$ yields the explicit formulas
\allowdisplaybreaks
\begin{equs}
 \label{Birkhoffrecursion}
 \begin{aligned}
	\varphi_{-}(\tau)   
	&= \mathbf{1}^*(\tau)  - Q\big((\varphi - \mathbf{1}^*)  \star \varphi_- \big)(\tau) 
			    	     = - Q\left(  \bar \varphi (\tau) \right)\\
	\bar \varphi (\tau) 
	&= \varphi(\tau) + \sideset{}{\!^{'}}\sum_{(\tau)} \varphi(\tau')\bigcdot_{\!\!\scriptscriptstyle A} \,  \varphi_{-}(\tau'')\\
	\varphi_{+}(\tau)  
	&= \varphi(\tau) + \sideset{}{\!^{'}}\sum_{(\tau)} \varphi(\tau') \bigcdot_{\!\!\scriptscriptstyle A} \, \varphi_{-}(\tau'') + \varphi_{-}(\tau)  
				    = \tilde{Q}\left(  \bar \varphi (\tau) \right) .
\end{aligned}  
\end{equs}

\begin{remark}  \label{rmk:antipode} 
Note that when $ Q = \id_A $, one recovers the recursive definition for the antipode $\CA$. Indeed, from \eqref{RBHopfBPHZ} we deduce that 
$$
	\varphi_-   = \mathbf{1}^* -  (\varphi - \mathbf{1}^* ) \star \varphi_-  
			=  \frac{1}{\mathbf{1}^* + (\varphi - \mathbf{1}^* )} = \varphi(\mathrm{id}^{-1})
			= \varphi \circ \CA,
$$
which is consistent with the antipode being the convolution inverse of the identity map, implying the antipode recursions (thanks to the connectedness of $H$)
\begin{equs}
	\CA \tau 		& = - \tau - \sideset{}{\!^{'}}\sum_{(\tau)} \tau' \CA\tau'' 
				    =  - \tau - \sideset{}{\!^{'}}\sum_{(\tau)} \CA(\tau') \tau''.
\end{equs}
\end{remark}
In the next section, we consider $ \hat H $ being a right-comodule over $ H $. The space $ \hat H $ is a connected graded unital algebra and we denote its product by $ m_{\hat H} $. The coaction 
$$ 
	\hat \Delta : \hat H \rightarrow \hat H \otimes H 
$$ 
is used to build a variant of factorisation \eqref{BPHZfact}. We suppose that we are also given an injection $ \iota :  H \rightarrow \hat H $.

\begin{proposition} \label{Birkhoff_comodule}
For every $ \varphi \in \text{char}(\hat H,A) $, there are unique linear maps $ \varphi_- : H \rightarrow A_- $ and $ \varphi_+ : \hat H \rightarrow A $:
\begin{equs}
 \label{Birkhoffrecursion2}
 \begin{aligned}
	\varphi_{-}  & = \mathbf{1}^* - Q\circ \bar \varphi \circ \iota, 
	\quad 
	 \bar \varphi=m_A\Big((\varphi - \mathbf{1}^* ) \otimes \varphi_- \Big) \hat \Delta\\
		\varphi_{+} & = \varphi \star \varphi_{-} 
					= m_A \left(   \varphi \otimes \varphi_{-} \right) \hat \Delta,
\end{aligned}  
\end{equs}
where the reduced co-module map is such that for every $ \tau \in H $
\begin{equs}
	\hat \Delta\!' \circ \iota(\tau) 
	=  \sideset{}{\!^{'}}\sum_{(\iota(\tau))} \iota(\tau)' \otimes \iota(\tau)'' 
	= \hat \Delta \circ \iota(\tau) - \iota(\tau) \otimes \one - \one \otimes {\tau}
\end{equs}
corresponds to the reduced coaction.
The linear maps $ \varphi_- $ and $ \varphi_+ $ are also algebra morphisms. Moreover, the map $ \varphi_+ \circ \iota : H \rightarrow A  $ takes values in $ A_+ $.
\end{proposition}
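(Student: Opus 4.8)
\emph{Plan.} The four assertions --- existence and uniqueness of the pair $(\varphi_-,\varphi_+)$, the fact that both are algebra morphisms, and that $\varphi_+\circ\iota$ ranges in $A_+$ --- will be obtained by a degree induction that transplants the classical Connes--Kreimer argument behind Proposition~\ref{classicalBirkhoff} to the comodule setting, with the coproduct of $H$ replaced by the coaction $\hat\Delta$ and with $\iota$ recording which arguments live in $H$. Throughout I will use the following structural facts, which are part of the comodule-Hopf algebra structure recalled in the next section: $\hat H$ is an $H$-comodule \emph{algebra}, i.e.\ $\hat\Delta:\hat H\to\hat H\otimes H$ is a morphism of graded unital algebras; $\iota$ is a morphism of graded unital algebras; and, by connectedness of the grading, in the reduced coaction $\hat\Delta\!'\circ\iota(\tau)=\sum'\iota(\tau)'\otimes\iota(\tau)''$ every left leg $\iota(\tau)'$ has strictly positive degree and every right leg $\iota(\tau)''$ has degree strictly smaller than $\deg\tau$, the two subtracted terms $\iota(\tau)\otimes\one$ and $\one\otimes\tau$ being precisely the degenerate ones.

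\emph{Step 1: well-posedness and a basic identity.} Since the right legs of $\hat\Delta\!'\circ\iota(\tau)$ have degree $<\deg\tau$, the quantity $\bar\varphi(\iota(\tau))$ depends on $\varphi_-$ only through its values in degrees $<\deg\tau$; hence the recursion $\varphi_-=\mathbf{1}^*-Q\circ\bar\varphi\circ\iota$ determines $\varphi_-$ uniquely by induction on degree, with $\varphi_-(\one)=1$ and, for non-scalar $\tau$, $\varphi_-(\tau)=-Q(\bar\varphi(\iota(\tau)))\in A_-$; then $\varphi_+:=m_A(\varphi\otimes\varphi_-)\hat\Delta$ is defined and uniquely determined on all of $\hat H$. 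Expanding $\hat\Delta\circ\iota(\tau)$ into its three pieces and using $\varphi(\one)=\varphi_-(\one)=1$, $(\varphi-\mathbf{1}^*)(\one)=0$ and $\mathbf{1}^*(\iota(\tau)')=0$, one gets for non-scalar $\tau$
\[
   \varphi_+(\iota(\tau))=\bar\varphi(\iota(\tau))+\varphi_-(\tau),
\]
an identity used repeatedly below.

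\emph{Step 2: $\varphi_-$ is multiplicative (the heart).} We prove $\varphi_-(\sigma\tau)=\varphi_-(\sigma)\,\varphi_-(\tau)$ for homogeneous $\sigma,\tau\in H$ by induction on $\deg\sigma+\deg\tau$; by linearity we may assume $\sigma,\tau$ non-scalar. Applying $m_A(\varphi\otimes\varphi_-)$ to $\hat\Delta(\iota(\sigma\tau))=\hat\Delta(\iota(\sigma))\cdot\hat\Delta(\iota(\tau))$ and expanding the product in the algebra $\hat H\otimes H$: in every resulting term whose two left legs are not both scalar, the product of the two right legs has degree $<\deg\sigma+\deg\tau$, so the inductive hypothesis applies to it, while the unique remaining term comes from $(\one\otimes\sigma)\cdot(\one\otimes\tau)=\one\otimes\sigma\tau$ and contributes exactly $\varphi_-(\sigma\tau)$. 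Using that $\varphi$ is a character to regroup the rest, one obtains
\[
   \varphi_+(\iota(\sigma\tau))=\varphi_-(\sigma\tau)+\varphi_+(\iota(\sigma))\,\varphi_+(\iota(\tau))-\varphi_-(\sigma)\,\varphi_-(\tau).
\]
Substituting the identity of Step~1 and writing $x=\bar\varphi(\iota(\sigma))$, $y=\bar\varphi(\iota(\tau))$, so that $\varphi_-(\sigma)=-Q(x)$ and $\varphi_-(\tau)=-Q(y)$, this collapses to $\bar\varphi(\iota(\sigma\tau))=xy-xQ(y)-Q(x)y$; applying $-Q$ and invoking the weight $-1$ Rota--Baxter identity \eqref{Rota_Baxter} with $f=x$, $g=y$ gives $\varphi_-(\sigma\tau)=-Q\big(xy-xQ(y)-Q(x)y\big)=Q(x)Q(y)=\varphi_-(\sigma)\,\varphi_-(\tau)$, closing the induction. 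This is the step I expect to be the main obstacle: one must verify that expanding the \emph{coaction} on a product isolates a single self-referential occurrence of $\varphi_-(\sigma\tau)$ with everything else reducible by the inductive hypothesis, so that the Rota--Baxter relation can be used exactly as in the connected case; it is precisely here that the algebra-morphism property of $\hat\Delta$ and $\iota$, and the precise degenerate part of $\hat\Delta\circ\iota$, are genuinely needed.

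\emph{Step 3: $\varphi_+$ and the range of $\varphi_+\circ\iota$.} Once $\varphi_-$ is known to be an algebra morphism, $\varphi_+=m_A\circ(\varphi\otimes\varphi_-)\circ\hat\Delta$ is a composite of algebra morphisms --- $\hat\Delta$ by the comodule-algebra assumption, $\varphi\otimes\varphi_-$ as a tensor product of characters into the commutative algebra $A$, and $m_A$ because $A$ is commutative --- hence itself an algebra morphism. Finally, for non-scalar $\tau$ the identity of Step~1 reads $\varphi_+(\iota(\tau))=\bar\varphi(\iota(\tau))-Q(\bar\varphi(\iota(\tau)))=\tilde Q(\bar\varphi(\iota(\tau)))\in A_+=\tilde Q(A)$, while $\varphi_+(\iota(\one))=\varphi_+(\one)=1_A\in A_+$; so $\varphi_+\circ\iota$ takes values in $A_+$, as claimed.
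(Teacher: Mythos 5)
Your proof is correct. Note that the paper itself states Proposition~\ref{Birkhoff_comodule} without proof (only the classical case is sketched in the remark following Proposition~\ref{classicalBirkhoff}, and Remark~\ref{twisted_antipodeb} records the $Q=\id_A$ specialisation), so there is no in-text argument to compare against; what you supply is the natural transplantation of the Connes--Kreimer/Rota--Baxter induction to the comodule setting, and it goes through exactly as you describe. The two points you single out are indeed the only places where anything beyond the connected case is needed: (i) the grading argument showing that in the expansion of $m_A(\varphi\otimes\varphi_-)\bigl(\hat\Delta(\iota(\sigma))\cdot\hat\Delta(\iota(\tau))\bigr)$ the only term of top right-leg degree is the one coming from $(\one\otimes\sigma)(\one\otimes\tau)$, which isolates the single self-referential occurrence of $\varphi_-(\sigma\tau)$ and yields $\bar\varphi(\iota(\sigma\tau))=xy-xQ(y)-Q(x)y$, after which \eqref{Rota_Baxter} closes the induction; and (ii) the structural hypotheses that $\hat\Delta$ and $\iota$ are morphisms of graded unital algebras and that the degenerate part of $\hat\Delta\circ\iota(\tau)$ is exactly $\iota(\tau)\otimes\one+\one\otimes\tau$. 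The paper leaves (ii) implicit in Section~\ref{sec::2} (it only calls $\hat H$ a connected graded unital algebra with a coaction and an injection $\iota$), but these properties do hold in all three applications of Section~\ref{sec::4}, and you are right that the algebra-morphism claims for $\varphi_\pm$ are false without them; making them explicit is a genuine improvement rather than an extra assumption. The only cosmetic caveat is the usual one, which the paper also glosses over: $\varphi_-(\one)=1$ need not lie in $A_-$ (e.g.\ in the Laurent-series example), so ``$\varphi_-$ takes values in $A_-$'' should be read on the augmentation ideal, exactly as you state it.
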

\begin{proof}
It follows essentially from the computation in \eqref{Birkhoffrecursion}, which can be verified using the Rota-Baxter relation. The uniqueness of the decomposition follows from $Q$ being idempotent. 
\end{proof}

\begin{remark} 
Whereas Proposition~\ref{classicalBirkhoff} gives a factorisation in the group $ (\text{char}(H,A), \star) $ of characters, Proposition~\ref{Birkhoff_comodule} does not encode a group factorisation, because the product $ \star  $ is derived from a coaction in this case.
\end{remark}

\begin{remark} \label{twisted_antipodeb} 
When $ Q = \id_A$, one recovers the recursive definition for the so-called twisted antipode $ \tilde{\CA} : H \rightarrow \hat H $:
\allowdisplaybreaks
\begin{equs}
	\tilde{\CA} & =-\iota  -m_{\hat H }(\id_{\hat H}\otimes \tilde{\CA})\hat \Delta\!'\circ \iota \\
	\varphi_{-}  & = \mathbf{1}^* - \bar \varphi \circ \iota 
				= \mathbf{1}^* - m_A\Big((\varphi - \mathbf{1}^* ) \otimes \varphi_- \Big) \hat \Delta \circ \iota 
				= \varphi \circ \tilde \CA.
\end{equs}
\end{remark}

\begin{remark} \label{multiplicative} 
Another example of linear maps $ Q $ that satisfy \eqref{Rota_Baxter} are idempotent algebra morphisms:
\begin{equs} \label{mult_Rota}
	Q(f \bigcdot_{\!\!\scriptscriptstyle A} \, g) = Q(f) \bigcdot_{\!\!\scriptscriptstyle A} \, Q(g), 
	\quad 
	Q \circ Q = Q.
\end{equs}
They appear in the negative renormalisation for Regularity Structures \cite{BHZ}, but also in numerical analysis when one wants to perform local error analysis. We refer the reader to \cite{BS} for details.
\end{remark}

\begin{remark} 
The Rota--Baxter map, $ Q $, can be replaced by a Rota--Baxter family of maps, $ (Q_{\alpha})_{\alpha \in \R_+} $, satisfying the identity \cite{EFGBP2008}
\begin{equs}
	Q_{\alpha}(f)\bigcdot_{\!\!\scriptscriptstyle A} \, Q_{\beta} (g) 
	= Q_{\alpha + \beta} \left(  Q_{\alpha}(f) \bigcdot_{\!\!\scriptscriptstyle A} \,  g 
		+  f  \bigcdot_{\!\!\scriptscriptstyle A} \, Q_{\beta}(g)  
		- f \bigcdot_{\!\!\scriptscriptstyle A} \, g \right).
\end{equs}
\end{remark}


\section{Bogoliubov-type recursions on regularity structure trees}
\label{sec::Birkhoff}

\subsection{Decorated trees}

Recall Remark~\ref{rmk:BCK} that $\mathcal{RT}$ referred to the set of non-planar rooted trees. Let $\Lab$ be a finite set containing so-called types. For a given $ d \in \N $ we define the set of decorations $ \mathcal{D} = \Lab \times \N^{d+1} $ and consider the set $ \mathcal{RT}^{\mathcal{D}} $ of $ \mathcal{D} $-decorated rooted trees that we call  RS (Regularity Structures) Trees. Elements of $\mathcal{RT}^{\mathcal{D}} $ are of the form  $T_{\Labe}^{\Labn} =  (T,\Labn,\Labe) $ where $T$ is a non-planar rooted tree with node and edge sets $N_T$ respectively $E_T$. The maps $ \Labn : N_T \rightarrow \N^{d+1} $ and $\Labe=(\Labe_1,\Labe_2): E_T \rightarrow \mathcal{D}$ are node respectively edge decorations. The tree product $ \bigcdot $ on $ \mathcal{RT}^{\mathcal{D}} $ is defined by 
 \begin{equation} 
 \label{treeproduct}
 	(T,\Labn,\Labe) \bigcdot (\bar T,\bar \Labn,\bar \Labe) 
 	= (T \bigcdot \bar T,\Labn + \bar \Labn, \Labe + \bar\Labe)\;, 
 \end{equation} 
where $ T \bigcdot \bar T $ is the rooted tree obtained by identifying the roots of $ T$ and $\bar T $. The sums $ \Labn + \bar\Labn$ and  $\Labe + \bar \Labe$ mean that decorations are added at the root and extended to the disjoint union by setting them to vanish on the other tree.  In other words, each edge and vertex
of both trees keeps its decoration, except the roots which merge into a new root decorated by
the sum of both corresponding decorations. In this paper, we will use mainly a symbolic notation for these decorated trees. 

\begin{enumerate}
 \item An edge decorated by  $ (\Labhom,p) \in \mathcal{D} $  is denoted by $ \CI_{(\Labhom,p)} $. The symbol $  \CI_{(\Labhom,p)} $ is also viewed as  the operation that grafts a tree onto a new root via a new edge with edge decoration $ (\Labhom,p) $.
 The new root at hand is
decorated with 0. In other words, $ \CI_{(\Labhom,p)} $ sends trees into planted trees.
 \item A factor $ X^k $ encodes a single node  $ \bullet^{k} $ decorated by $ k \in \N^{d+1} $. We write $ X_i $, $ i \in \lbrace 1,\ldots,d+1 \rbrace $, to denote $ X^{e_i} $, where the $\{e_1, \ldots, e_{d+1}\}$ form the canonical basis of $ \N^{d+1} $. The element $ X^0 $ is identified with $ \one $.
 
 \item In the following we will employ a drastically simplified notation for decorated trees by writing: $\widehat{\tau}=T_{\Labe}^{\Labn} \in \mathcal{RT}^{\mathcal{D}}$. 
 \end{enumerate}
 
Recall from Remark \ref{rmk:BCK} the significance of the $B_+$-operation -- in the definition of the Connes--Kreimer coproduct on rooted trees in $\mathcal{RT}$. We use an analog representation for RS trees $\widehat{\tau} \in \mathcal{RT}^{\mathcal{D}} $ using $ \CI_{(\Labhom,p)}$
 \[
	\widehat{\tau} =  X^{k_0} \prod_{i=1}^{n} \CI_{(\Labhom_i,p_i)}(\widehat{\tau}_i) ,
 \]
 where the $\widehat{\tau}_i $ belong to $  \mathcal{RT}^{\mathcal{D}} $ and the product $ \prod_i^n $ is the tree product. The factor $ X^{k_0} $ expresses the fact that the root
of the resulting tree is decorated by $k_0$. The main difference with the $B_+$-operation is that the edges connecting to the new root carry different decorations. Using symbolic notation, one can reformulate the tree product \eqref{treeproduct} as
 \begin{equs}
	\left( X^{k_0} \prod_{i} \CI_{(\Labhom_i,p_i)}(\widehat{\tau}_i)  \right) 
	\left( X^{ k'_0} \prod_{j} \CI_{(\Labhom_j', p_j')}(\widehat{{\tau}}_j') \right) 
	= X^{k_0 + k_0'} \prod_{i,j} \CI_{(\Labhom_i,p_i)}(\hat \tau_i)   \CI_{(\Labhom_j', p_j')}(\widehat{{\tau}}_j').
\end{equs}
The space of $\mathcal{D} $-decorated trees is denoted $ \mathcal{T}^{\mathcal{D}} = \langle \mathcal{RT}^{\mathcal{D}} \rangle $  where $\langle \bigcdot  \rangle$ denotes the $ \R$-linear span. Endowed with the tree product it becomes a commutative algebra. We now associate numbers to decorated trees, depending on the decorations. Further below, it will become clear that they have a transparent interpretation in the context of SPDEs. Let us fix a scaling $ \s \in \N^{d+1} $ and the associate $ |\bigcdot|_{\s} : \Lab \rightarrow \R $. We extend the latter to $ k \in \N^{d+1} $ by $ |k|_{\s}  :=\sum_{i=1}^{d+1} \s_i k_i $. The degree of a decorated rooted tree $ T^{\Labn}_{\Labe} $  is defined by
\begin{equs}
	| T^{\Labn}_{\Labe} |_{\s} 
	=   \sum_{v \in N_{T}}  |\Labn(v)  |_{\s}+ \sum_{e \in E_{T}} | \big(\Labe_1(e) -  \Labe_2(e)\big)   |_{\s},
\end{equs}
where $ \Labe = (\Labe_1,\Labe_2) $. 
 Using this degree, we define the set $ \mathcal{RT}^{\CD}_{+} $ which is included in $ \mathcal{RT}^{\CD}$ by 
\begin{equs} \label{Tplus}
	 \mathcal{RT}^{\CD}_{+} 
 	:=   \Big\lbrace X^{k_0} \prod_{i=1}^{n} \CI_{(\Labhom_i,p_i)}(\widehat{\tau}_i), 
	| \CI_{(\Labhom_i,p_i)}(\widehat{\tau}_i) |_{\s} > 0, \widehat{\tau}_i \in \mathcal{RT}^{\mathcal{D}}, \, k_0 \in \N^{d+1} \Big\rbrace . 
\end{equs}
This definition means that all the branches outgoing from the root must be of positive degree.
We denote by $ \CT_+^{\mathcal{D}} $ the space $ \langle \mathcal{RT}_+^{\mathcal{D}}  \rangle $ and call it the positive part. The corresponding projector $\pi_+$ maps $ \CT^{\mathcal{D}} $ to $ \CT_+^{\mathcal{D}} $. In the following we denote by $ \mathcal{M} $ the tree product on $ \CT^{\mathcal{D}} $.


\subsection{Comodule-Hopf algebra structures}
\label{ssect:comodHopf}

We want to endow the previously introduced algebra on decorated trees with a coproduct which is similar to the Butcher--Connes--Kreimer coproduct \cite{CK} on rooted trees \eqref{coprodrecur}. However, primitiveness of many elements is lost due to the particular nature of the decorations of trees in our setting. We will provide a recursive definition of a coproduct, denoted $\Deltap$, similar to \eqref{coprodrecur} which suffices for formulating the main result, i.e., an algebraic Birkhoff-type factorisation. The map $\Deltap$ is recursively defined on the space of RS trees $\CT^{\CD}$:
\allowdisplaybreaks
\begin{align} 
\begin{split}
	\Deltap \one & = \one \otimes \one, 
	\quad 
	\Deltap X_i  = X_i \otimes \one  + \one \otimes X_i,  \label{coproduct}\\
	\Deltap \CI_{(\Labhom,p)}(\widehat{\tau})&  = 
	\one \otimes \CI_{(\Labhom,p)}(\widehat{\tau}) 
	+  \left( \CI_{(\Labhom,p)} \otimes \id \right) \Deltap \widehat{\tau}
	+\!\! \sum_{\ell \in \N^{d+1} \atop \ell \neq 0}\!\! \frac{X^{\ell}}{\ell !} \otimes \CI_{(\Labhom,p + \ell)}(\widehat{\tau}). 
\end{split}
\end{align}
Recall that $ \one^* : \CT^{\mathcal{D}} \rightarrow \R $ refers to the counit. From the map $ \Deltap $, one can construct a coproduct and a coaction that we denote differently:

\begin{itemize}
\item $ \Deltap :  \CT^{\CD} \rightarrow \CT^{\CD} \otimes \CT^{\CD} $. Here, a specific bigrading is required. This is to make the
above infinite sum well-defined. See \cite[Section 2.3]{BHZ} for more details. A possible choice would be 
\[ 
	(|T_{\Labe}^{\Labn}|_{bi}) = ( |\Labe_2|_{\s}, |N_T \setminus \lbrace \varrho_T \rbrace| + |E_T| ), 
\] 
where  $ |\Labe_2|_{\s} = \sum_{e \in E_T} | \Labe_2(e) |_{\s} $, $ \varrho_T $ is the root vertex of $ T $, $ |N_T| $ and $ |E_T| $ are the numbers of nodes and edges for the tree $T$. This map will have the interpretation of performing an infinite subtraction. Its recursive description appears in \cite[Proposition 4.16]{BHZ}.

\item  $ \hat{ \Delta}^{\!+} = (\id \otimes \pi_+) \Deltap  :  \CT^{\CD} \rightarrow \CT^{\CD} \otimes \CT^{\CD}_{+} $. Here, no bigrading is required, as the sum in \eqref{coproduct} is finite. It should be understood as finite subtraction, with its length determined by the degree of the branch outgoing from the root vertex.

\item $ \bar{ \Delta}^{\!+} = (\pi_+ \otimes \pi_+) \Deltap  :  \CT^{\CD}_{+} \rightarrow \CT^{\CD}_+ \otimes \CT^{\CD}_{+},$ as before no bigrading is required. We put an extra assumption on the trunk by maintaining the positive degree of the branches outgoing from the root. This is a rather strong constraint because the degree of the trunk is lower than that of the original tree (branches of positive degree have been removed).
\end{itemize}

\begin{remark}
In \cite{BS}, a similar coproduct as \eqref{coproduct} is used. The main difference relies on the projection $ \pi_+ $. Indeed, for a numerical scheme the length of the Taylor expansion depends on the order of the scheme, whereas in our context it depends on the regularity of the distribution we would like to re-center.
\end{remark}

\smallskip

Following \cite[Proposition 3.23]{BHZ}, we are able to put a Hopf algebra structure on the different sets of decorated trees: 

\begin{proposition}
We have the following properties

\begin{itemize}
\item Considering the space $\CT^{\mathcal{D}}$ of decorated trees, there exists an algebra morphism $ \CA_+ : \CT^{\mathcal{D}} \rightarrow \CT^{\mathcal{D}}  $  such that  $ H^{\mathcal{D}}=(\CT^{\mathcal{D}}, \mathcal{M}, \Deltap, \one, \one^*, \CA_+  ) $ is a Hopf algebra.

\item Considering the positive part $\CT^{\mathcal{D}}_+ \subset \CT^{\mathcal{D}}$, there exists an algebra morphism $ \bar{\CA}_+ : \CT_+^{\mathcal{D}} \rightarrow \CT_+^{\mathcal{D}}  $  so that  $ H^{\mathcal{D}}_+=(\CT_+^{\mathcal{D}}, \mathcal{M}, \bar{ \Delta}^{\!+}, \one, \one^*, \bar{\CA}_+  ) $ is a Hopf algebra.

\item The map  $  \hat{ \Delta}^{\!+} \colon  \CT^{\CD} \rightarrow \CT^{\CD} \otimes \CT^{\CD}_{+} $ is a coaction satisfying:
\begin{equs}
	\left(  \hat{ \Delta}^{\!+} \otimes \id  \right) \hat{ \Delta}^{\!+} 
	= \left(  \id \otimes  \bar{ \Delta}^{\!+} \right) \hat{ \Delta}^{\!+}
\end{equs}
and turns $ \CT^{\mathcal{D}} $ into a right-comodule for $ \CT^{\mathcal{D}}_+ $. 
\end{itemize}
\end{proposition}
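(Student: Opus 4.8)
The plan is to follow the recursive structure of the coproduct $\Deltap$ and establish each claim by induction on the bigrading $(|T_{\Labe}^{\Labn}|_{bi})$, which makes all the infinite sums well-defined and provides the scaffolding for the inductive arguments. First I would verify that $\Deltap$ is coassociative as a map $\CT^{\CD} \to \CT^{\CD} \otimes \CT^{\CD} \otimes \CT^{\CD}$; since both the product $\mathcal{M}$ and $\Deltap$ are compatible (one checks $\Deltap$ is an algebra morphism directly from \eqref{coproduct} and \eqref{treeproduct}), it suffices to check coassociativity on the generators $\one$, $X_i$, and $\CI_{(\Labhom,p)}(\widehat\tau)$. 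The case of $\one$ and $X_i$ is immediate; for $\CI_{(\Labhom,p)}(\widehat\tau)$ one applies $\Deltap \otimes \id$ and $\id \otimes \Deltap$ to the three terms on the right-hand side of \eqref{coproduct}, uses the inductive hypothesis on $\widehat\tau$ for the middle term, and then matches the contributions coming from the Taylor-jet sum $\sum_\ell \frac{X^\ell}{\ell!}\otimes \CI_{(\Labhom,p+\ell)}(\widehat\tau)$. The key algebraic identity here is the binomial relation $\frac{X^\ell}{\ell!} = \sum_{\ell_1+\ell_2=\ell} \frac{X^{\ell_1}}{\ell_1!}\otimes\frac{X^{\ell_2}}{\ell_2!}$ combined with the reindexing $p \mapsto p+\ell_1 \mapsto p+\ell_1+\ell_2$, which is precisely what makes the two ways of iterating the coproduct agree.

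Once coassociativity is in hand, the counit property $(\one^* \otimes \id)\Deltap = \id = (\id \otimes \one^*)\Deltap$ is a short check on generators using that $\one^*$ kills everything except $\one$ and that the Taylor-jet sum in \eqref{coproduct} has all $\ell \neq 0$, so $\one^*$ applied to the first leg annihilates it. For the antipode $\CA_+$, I would invoke the standard fact that a graded bialgebra connected with respect to a suitable grading admits a unique antipode given by the recursion $\CA_+ \tau = -\tau - \sideset{}{'}\sum_{(\tau)} \CA_+(\tau')\, \tau''$ (cf. Remark~\ref{rmk:antipode}); here connectedness is with respect to the second component of the bigrading, since $|N_T \setminus \{\varrho_T\}| + |E_T| = 0$ forces $\tau$ to be a product of the $X_i$'s, i.e. lies in the polynomial sub-Hopf-algebra $\R[X_1,\ldots,X_{d+1}]$, on which the antipode is just $X^k \mapsto (-X)^k$ — one checks the recursion terminates and is well-defined degree by degree. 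That $\CA_+$ is an algebra morphism follows from $\Deltap$ being an algebra morphism plus uniqueness of the antipode.

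For the second bullet, the positive part $\CT^{\CD}_+$, the argument is parallel: one first checks that $\bar\Delta^{\!+} = (\pi_+\otimes\pi_+)\Deltap$ lands in $\CT^{\CD}_+ \otimes \CT^{\CD}_+$ and is coassociative and counital. Coassociativity of $\bar\Delta^{\!+}$ should be deduced from that of $\Deltap$ by showing the projector $\pi_+$ is "compatible" in the sense that $(\pi_+\otimes\pi_+\otimes\pi_+)$ applied to the iterated $\Deltap$ factors through $\bar\Delta^{\!+}$; the subtle point is that the trunk (the first leg) of $\bar\Delta^{\!+}$ keeps positive degree on branches out of the root, and one must check this constraint is stable under iteration — that removing positive-degree branches from a tree all of whose root-branches are positive again yields such a tree, which holds because the trunk inherits the root-branch structure. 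Connectedness of $\CT^{\CD}_+$ with respect to the appropriate grading (here the number of edges plus non-root nodes, which is strictly positive for any nontrivial tree in $\mathcal{RT}_+^{\CD}$ other than pure polynomials) then yields $\bar\CA_+$ via the same recursion.

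For the third bullet, the comodule identity $(\hat\Delta^{\!+}\otimes\id)\hat\Delta^{\!+} = (\id\otimes\bar\Delta^{\!+})\hat\Delta^{\!+}$ follows by applying $(\id\otimes\pi_+)$ to the first factor and $(\id\otimes\pi_+\otimes\pi_+)$ appropriately to the coassociativity identity for $\Deltap$; concretely, $(\hat\Delta^{\!+}\otimes\id)\hat\Delta^{\!+} = (\id\otimes\pi_+\otimes\id)(\Deltap\otimes\id)(\id\otimes\pi_+)\Deltap$, and one commutes $\pi_+$ past $\Deltap$ on the right-hand leg — the point being that $\pi_+$ on the last leg of $\Deltap$ commutes with $(\Deltap\otimes\id)$ in the relevant sense because the Taylor-jet correction terms $\frac{X^\ell}{\ell!}\otimes\CI_{(\Labhom,p+\ell)}(\widehat\tau)$ preserve the degree of the branch $\CI$, hence preserve membership in the positive part. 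The compatibility of $\hat\Delta^{\!+}$ with the algebra structure (i.e. $\CT^{\CD}$ being a right comodule-algebra over the bialgebra $\CT^{\CD}_+$) is again a direct consequence of $\Deltap$ and $\pi_+$ being multiplicative.

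I expect the main obstacle to be the careful bookkeeping in the coassociativity of $\Deltap$ for the planted-tree generator: one must track three sources of terms — the genuine subtrees from $\Deltap\widehat\tau$, the "new root" contributions $\one\otimes\CI_{(\Labhom,p)}(\widehat\tau)$, and the two nested Taylor-jet sums — and verify via the binomial identity that iterating in the two orders produces exactly matching families indexed by pairs $(\ell_1,\ell_2)$ with the edge decoration shifted consistently. A secondary subtlety is making the infinite sums rigorous: one must confirm that the chosen bigrading $(|\Labe_2|_{\s}, |N_T\setminus\{\varrho_T\}|+|E_T|)$ is such that each graded piece of $\Deltap\tau$ is a finite sum, which requires that the Taylor-jet sum, while infinite, only produces finitely many terms in each fixed bidegree — this is exactly the role of the first component $|\Labe_2|_{\s}$, since increasing $p$ to $p+\ell$ strictly increases $|\Labe_2|_{\s}$ on the right-hand leg (for $\ell\neq 0$, assuming $\s_i > 0$), so boundedly many $\ell$ contribute to any fixed bidegree. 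Everything else reduces to the standard machinery of connected graded bialgebras together with the observation that $\pi_+$ interacts well with all the structure maps.
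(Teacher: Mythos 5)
Your strategy is genuinely different from the paper's, which offers no proof at all here: it delegates everything to \cite{BHZ} (Propositions~3.23, 4.16--4.18, 6.2), where the coproduct is \emph{defined} by an explicit non-recursive sum over subtrees with Taylor deformations and the recursion \eqref{coproduct} is only derived afterwards. You instead take \eqref{coproduct} as the definition and verify everything by induction. Most of this goes through: the coassociativity computation on $\CI_{(\Labhom,p)}(\widehat\tau)$ via the binomial splitting of $X^{\ell}/\ell!$ is correct (the three families of terms do match exactly), the counit check is right, and reducing both the coassociativity of $\bar{\Delta}^{\!+}$ and the comodule identity to the single statement $(\pi_+\otimes\pi_+)\Deltap(\id-\pi_+)=0$ is the right move. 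Your justification of that statement is slightly off, though: the Taylor term does \emph{not} ``preserve the degree of the branch'' --- it lowers $|\CI_{(\Labhom,p+\ell)}(\widehat\tau)|_{\s}$ by $|\ell|_{\s}$ while preserving the degree of the whole tensor factor $\frac{X^{\ell}}{\ell!}\otimes\CI_{(\Labhom,p+\ell)}(\widehat\tau)$. What you actually need is that $|\cdot|_{\s}$ is additive across the two legs of $\Deltap$, so that a non-positive root branch of $\widehat\tau$ produces a non-positive root branch in whichever leg it lands in, and is therefore annihilated by one of the two projectors.

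The one step that would fail as written is the antipode. The degree-zero component of $H^{\CD}$ for the grading $|N_T\setminus\{\varrho_T\}|+|E_T|$ is the whole of $\R[X_1,\ldots,X_{d+1}]$, so $H^{\CD}$ is \emph{not} connected --- this is exactly the ``main difficulty'' the paper flags in its introduction --- and the recursion $\CA_+\tau=-\tau-\sideset{}{\!^{'}}\sum_{(\tau)}\CA_+(\tau')\,\tau''$ from Remark~\ref{rmk:antipode} is not well founded with respect to that grading: the reduced coproduct of $\CI_{(\Labhom,q)}(X_i)$ contains $\CI_{(\Labhom,q)}(\one)\otimes X_i$, and $\CI_{(\Labhom,q)}(\one)$ has the same number of edges and non-root nodes as $\CI_{(\Labhom,q)}(X_i)$, so the ``induction'' calls itself at the same degree. (The other flavour, $-\tau-\sideset{}{\!^{'}}\sum_{(\tau)}\tau'\,\CA_+(\tau'')$, is worse: the second legs $\CI_{(\Labhom,p+\ell)}(\widehat\tau)$ form an infinite family of the same size as $\tau$.) To repair this you either need a finer well-founded induction --- e.g.\ lexicographically on (number of edges, total node decoration), noting that a first leg with the full edge count must be paired with a nontrivial polynomial and hence has strictly smaller node decoration --- or you should use the recursion the paper actually quotes, $\CA_+\CI_{(\Labhom,p)}(\widehat\tau)=-\sum_{\ell}\frac{(-X)^{\ell}}{\ell!}\,\mathcal{M}\bigl(\CI_{(\Labhom,p+\ell)}\otimes\CA_+\bigr)\Deltap\widehat\tau$, which only ever applies $\CA_+$ to second legs of $\Deltap\widehat\tau$, all of which have strictly fewer edges than $\CI_{(\Labhom,p)}(\widehat\tau)$. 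In either case the values of $\CA_+$ are infinite series over $\ell$, so the target must be the bigraded completion; your observation that the first component $|\Labe_2|_{\s}$ of the bigrading controls the sum over $\ell$ is the right ingredient, but it has to be invoked for the antipode and not only for $\Deltap$ itself.
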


\begin{remark}
The antipode for $ H^{\mathcal{D}} $ is described in terms of a recursive formula given in \cite[Proposition 4.18]{BHZ} by 
\begin{equs}
\begin{split}
	\CA_+ X_i &= - X_i,\\
	\CA_+ \CI_{(\Labhom, p)}(\widehat{\tau}) 
	&= - \sum_{\ell \in \N^{d+1}} \frac{(-X)^{\ell}}{\ell!}  \mathcal{M} \left( \CI_{(\Labhom, p + \ell)}  \otimes \CA_+ \right) \Deltap\widehat{\tau}.
\end{split}
\end{equs}

The infinite sum is made rigorous by use of a specific bigrading. For the antipode of $ H^{\CD}_+$, we get a similar definition \cite[Proposition 6.2]{BHZ}: 

\begin{align} 
\begin{split}
	\bar{\CA}_+ X_i & = - X_i,  \\ 
	\bar{\CA}_+  \CI_{(\Labhom, p)}(\widehat{\tau}) 
	& = - \sum_{\ell \in \N^{d+1}} \frac{(-X)^{\ell}}{\ell!}  \mathcal{M} \left( \pi_+ \CI_{(\Labhom, p + \ell)}  \otimes \bar{\CA}_+ \right) \hat{ \Delta}^{\!+}\widehat{\tau}. \label{plusantipode}
\end{split}
\end{align}
However, the difference is the introduction of the  projector $ \pi_+ $ as well as the coaction $\hat{ \Delta}^{\!+}$, which make the sum finite.
\end{remark}

In fact, the map which is of interest to us, is the one where the projector $ \pi_+$ is not used in \eqref{plusantipode}. This map will be denoted $ \tilde{\CA}_+ : \CT_+^{\mathcal{D}} \rightarrow \CT^{\mathcal{D}} $ and is given in \cite[Proposition 6.3]{BHZ} by:
\begin{align} 
\begin{split}
	\tilde{\CA}_+ X_i & = - X_i, \quad  \\ 
	\tilde{\CA}_+ \CI_{(\Labhom, p)}(\widehat{\tau}) 
	& = - \sum_{|\ell|_{\s} \leq | \CI_{(\Labhom,p)}(\widehat{\tau}) |_{\s}} \frac{(-X)^{\ell}}{\ell!}  \mathcal{M} \left(  \CI_{(\Labhom, p + \ell)}  \otimes \tilde{\CA}_+ \right) \hat{ \Delta}^{\!+}\widehat{\tau}. \label{twistedantipode}
\end{split}
\end{align}
It is called twisted antipode and plays a major role in describing the local behaviour of solutions of singular SPDEs.
In \eqref{twistedantipode}, the projection $ \pi_+ $ is replaced by a global one which is based on the degree of the decorated tree on which we apply the twisted antipode.
 In this work we will reinterpret this map as a Bogoliubov-type recursion. The aim is to bridge the gap between the renormalisation procedure developed for singular SPDEs and Connes--Kreimer's formulation of the BPHZ renormalisation procedure in perturbative quantum field theory in terms of an algebraic Birkhoff factorisation on the level of regularised Feynman rules seen as a Hopf algebra character. In the next section, we introduce a modified reduced coproduct together with a Rota--Baxter map essential for Bogoliubov-type recursions.

\smallskip 

Before stating the definition of a modified reduced coproduct, we notice that the decorated tree $\bullet^n = X^n$ is not primitive with respect to the coproduct \eqref{coproduct}. Indeed
\begin{equs}
	\Deltap X^n = X^n \otimes \one + \one \otimes X^n 
				+ \sum_{k \in \N^{d+1} \atop k \neq 0,n} \binom{n}{k} X^{k} \otimes X^{n-k} ,
\end{equs}
where the binomial coefficient $\binom{n}{k}:=\prod_{i=1}^{d+1}\binom{n_i}{k_i}$ and $\binom{n_i}{k_i}$ is zero when $k_i$ is bigger than $n_i$. We refer the reader to \cite[Section 2]{BHZ} for details. For a tree of the form $ \CI_{(\Labhom,p + \ell)}(\widehat\tau) $, called planted tree, the main difference with the Butcher--Connes--Kreimer coproduct \eqref{coprodrecur} is that one goes to the next orders by adding derivatives and polynomial decorations in \eqref{coproduct}. These extra terms are given by:
\begin{equs}
	\sum_{\ell \in \N^{d+1} \atop \ell \neq 0} \frac{X^{\ell}}{\ell !} 
	\otimes \CI_{(\Labhom,p + \ell)}(\widehat\tau). 
\end{equs} 
A natural choice is to also remove those terms from a potential definition of what we will call modified reduced coproduct. We need to generalise this procedure to arbitrary trees, i.e., products of planted trees with a non-zero decoration at the root. The basic idea is to remove any polynomial part on the righthand side of the modified reduced coproduct.

\begin{definition}
The modified reduced coproduct map $ \Delta^{\! +}_{\scriptscriptstyle{\mathrm{red}}} $ is given on $ X^{k_0} $ by:
\begin{equs}
	 \Delta^{\! +}_{\scriptscriptstyle{\mathrm{red}}} X^{k_0} = 0.
\end{equs}
 Then for any rooted tree $\widehat\tau =  X^{k_0} \prod_{i=1}^{n} \CI_{(\Labhom_i,p_i)}(\widehat\tau_i) \in  \CT^{\mathcal{D}} $ we set:
\begin{equs}
\label{modredcoprod}
\begin{aligned}
	\Delta^{\! +}_{\scriptscriptstyle{\mathrm{red}}} \widehat\tau 
	& =  \hat \Delta^{\! +} \widehat\tau  - \widehat\tau \otimes \one\\ 
	& -\!\!\!  \sum_{ \ell_1,\ldots,\ell_n \atop \ell_i,k \in \N^{d+1}}\!\! \frac{1}{ \bar \ell!}\binom{k_0}{k} 
	X^{k+ \sum_i \ell_i}  \otimes X^{k_0-k} \prod_{i=1}^{n} \pi_+ \CI_{(\Labhom_i,p_i + \ell_i)}(\widehat\tau_i)
\end{aligned}
\end{equs}
where $ \bar \ell ! = \prod_i \ell_i ! $ and for $ k_0 = 0 $, the sum over $ k $ contains only the term $  k = 0 $ by convention.
\end{definition}

\begin{remark} Sweedler's notation is used for the modified reduced coproduct \eqref{modredcoprod}:
\begin{equs} \label{Sweedler_reduced}
	\Delta^{\! +}_{\scriptscriptstyle{\mathrm{red}}} \widehat\tau 
	= \sideset{}{^+}\sum_{(\widehat\tau)} \widehat{\tau}' \otimes \widehat{\tau}''.
\end{equs} 
\end{remark}

\begin{remark}
The definition of the modified reduced coproduct implies the primitiveness of $ \bullet^n $, i.e., $ \Delta^{\! +}_{\scriptscriptstyle{\mathrm{red}}} X^{n} = 0 $. Moreover, one gets the following recursion:
\begin{equs} 
\label{recursivereduced}
	\Delta^{\! +}_{\scriptscriptstyle{\mathrm{red}}}\CI_{(\Labhom,p)}(\widehat\tau)  
	= \left( \CI_{(\Labhom,p)} \otimes \gamma \right)  \hat \Delta^{\! +} \widehat\tau,
\end{equs}
where $ \gamma = \id - \one^{*} $ is the augmentation projector, which is zero on the empty tree $\one$ and the identity otherwise.
\end{remark}


\subsection{Bogoliubov's recursion}

After having introduced the modified reduced coproduct \eqref{modredcoprod}, we need to consider the space of characters, which will be used to iterate the Bogoliubov-type recursion. These are linear maps from $H^{\mathcal{D}} $ to the specific target space of functions, $ \CH = \mathcal{C}^{\infty}(\R^{d+1},\R) $, respecting the tree product $\mathcal{M}$. 
The recursion we want to set up will produce a decomposition of a specific character into a product of two characters, and will depend on a parameter $ x \in \R^{d+1} $. The latter is used to fix the corresponding splitting of the target space
\begin{equs}
	\CH = \CH_{x}^{+} \oplus \CH_x^{-},
\end{equs} 
where $\CH_{x}^{+}  $ contains the  non-polynomial functions vanishing at $ x $ and $ \CH_x^{-} $ consists of polynomial functions whose coefficients are functions of $ x $. Indeed, for any $ f \in \CH $ one has the straightforward splitting $ f = f - f(x) + f(x) $. The next definition presents the key map lying at the origin of the Bogoliubov-type recursion. It has to be understood as a Taylor jet (of order $\alpha$).

\begin{definition}
We set for $\alpha\in\R_+$, $x, y\in\R^{d+1}$ and $f\in\CH$
\begin{equs}
\label{op}
	\mathrm{T}_{\alpha,x,y}f \eqdef \sum_{\ell \in \N^{d+1} \atop |\ell|_{\s} < \alpha } \frac{(y-x)^{\ell}}{\ell !} (D^{\ell}f)(x).
\end{equs}
\end{definition}
 From \eqref{op}, we immediately see that $ \mathrm{T}_{\alpha,x,y}f \in \CH_x^- $ when $ f \in \mathcal{H} $. The next lemma provides some key properties of the Taylor jet \eqref{op} which will be used in the sequel.

\begin{lemma}\label{lem:RBb}
The operators $ \mathrm{T}_{\bigcdot,x,\bigcdot} $ defined in $ \eqref{op} $ satisfy for every $\alpha,\beta \in\R_+$ and functions $f, g\in\CC^\infty(\R^{d+1},\R) $ the following Rota--Baxter-type identity
\begin{equs} \label{RB}
	 (\mathrm{T}_{\alpha,x,\bigcdot} f)(\mathrm{T}_{\beta,x,\bigcdot} g) 
	 = \mathrm{T}_{\alpha + \beta,x,\bigcdot} [ (\mathrm{T}_{\alpha,x,\bigcdot} f) g 
	 + f (\mathrm{T}_{\beta,x,\bigcdot} g) -f g ].
 \end{equs}
 For a fixed but arbitrary $\bar{x} \in \R^{d+1}$ we have
 \begin{equs} \label{identaylor}
	 \mathrm{T}_{\alpha,x,y} f 
	 = \sum_{|\ell|_{\s} < \alpha  } \frac{(y-\bar x)^{\ell}}{\ell !} 
	 \mathrm{T}_{\alpha- |\ell|_{\s},x,\bar x} [D^{\ell} f].
 \end{equs}
\end{lemma}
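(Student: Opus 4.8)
\medskip

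The plan is to regard $\mathrm{T}_{\alpha,x,\cdot}$ as the truncation at weighted order $\alpha$ of the formal Taylor expansion at $x$, and to use that such truncations are multiplicative up to ``higher order'' corrections. For $f \in \CH$, let $\hat f := \sum_{m \in \N^{d+1}} \frac{(\cdot - x)^m}{m!}(D^m f)(x)$ denote its formal Taylor series centered at $x$, regarded as a formal power series in $(\cdot - x)$, and let $\pi_{<\gamma}$ be the linear projection keeping only the monomials of weighted degree $< \gamma$; since the scaling $\s$ has positive entries this is a genuine finite truncation, and $\mathrm{T}_{\gamma,x,\cdot} f = \pi_{<\gamma}\hat f$. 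First I would record three elementary facts: $\pi_{<\gamma}$ is idempotent and fixes every polynomial all of whose monomials (expanded around $x$) have weighted degree $< \gamma$; the Leibniz rule gives $\widehat{fg} = \hat f\,\hat g$ as formal series; and $|\cdot|_\s$ is additive on $\N^{d+1}$, so a product of monomials of weighted degrees $a$ and $b$ has weighted degree $a+b$.

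For the Rota--Baxter-type identity \eqref{RB}, set $P := \mathrm{T}_{\alpha,x,\cdot}f = \pi_{<\alpha}\hat f$ and $R := \mathrm{T}_{\beta,x,\cdot}g = \pi_{<\beta}\hat g$, and split $\hat f = P + P'$, $\hat g = R + R'$ with $P' := (\id - \pi_{<\alpha})\hat f$ and $R' := (\id - \pi_{<\beta})\hat g$ supported on monomials of weighted degree $\geq \alpha$ resp.\ $\geq \beta$. Expanding products in the formal power series ring (using $\hat f = P+P'$, $\hat g = R+R'$) gives the identity $(\pi_{<\alpha}\hat f)\hat g + \hat f(\pi_{<\beta}\hat g) - \hat f\hat g = PR - P'R'$. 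Since the right-hand side of \eqref{RB} is $\mathrm{T}_{\alpha+\beta,x,\cdot}$ applied to $Pg + fR - fg$, and $\widehat{Pg} = P\hat g$, $\widehat{fR} = \hat f R$, it equals $\pi_{<\alpha+\beta}(PR - P'R') = \pi_{<\alpha+\beta}(PR) - \pi_{<\alpha+\beta}(P'R')$. Now every monomial of $PR$ has weighted degree $< \alpha+\beta$, so $\pi_{<\alpha+\beta}(PR) = PR$, whereas every monomial of $P'R'$ has weighted degree $\geq \alpha+\beta$, so $\pi_{<\alpha+\beta}(P'R') = 0$. Hence the right-hand side of \eqref{RB} equals $PR = (\mathrm{T}_{\alpha,x,\cdot}f)(\mathrm{T}_{\beta,x,\cdot}g)$, its left-hand side.

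For \eqref{identaylor} I would argue by direct reindexing. Substituting the definition of $\mathrm{T}_{\alpha - |\ell|_\s, x, \bar x}$ into the right-hand side and using $D^k D^\ell f = D^{k+\ell} f$ turns it into a double sum over pairs $(\ell, k) \in (\N^{d+1})^2$ with $|\ell|_\s < \alpha$ and $|k|_\s < \alpha - |\ell|_\s$; by additivity of $|\cdot|_\s$, this constraint says exactly that $m := k + \ell$ satisfies $|m|_\s < \alpha$. Summing first over $m$ and then over the decompositions $m = k + \ell$ leaves $\sum_{|m|_\s < \alpha}(D^m f)(x)\sum_{k+\ell = m}\frac{(y-\bar x)^\ell}{\ell!}\frac{(\bar x - x)^k}{k!}$, and the inner sum equals $\frac{\bigl((y-\bar x) + (\bar x - x)\bigr)^m}{m!} = \frac{(y-x)^m}{m!}$ by the multi-index binomial theorem. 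This is precisely $\mathrm{T}_{\alpha,x,y}f$.

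The one place requiring care is the bookkeeping in \eqref{RB}: one must see why the specific combination $(\mathrm{T}_{\alpha,x,\cdot}f)g + f(\mathrm{T}_{\beta,x,\cdot}g) - fg$ is the right one — namely that the cross terms $PR'$ and $P'R$ appearing in $(\pi_{<\alpha}\hat f)\hat g + \hat f(\pi_{<\beta}\hat g)$ cancel against the corresponding terms of $\hat f\hat g$, leaving only $PR$ (fixed by $\pi_{<\alpha+\beta}$) and $-P'R'$ (killed by $\pi_{<\alpha+\beta}$). All remaining ingredients — finiteness of the truncations, the formal-series arithmetic, and the Leibniz and binomial identities — are routine, provided only that the scaling has strictly positive components so that each weighted-degree cutoff is a finite sum.
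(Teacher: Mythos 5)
Your proof is correct. For \eqref{identaylor} you use exactly the paper's argument: substitute the definition, reindex the double sum over $(\ell,k)$ by $m=\ell+k$ using additivity of $|\cdot|_\s$, and conclude with the multi-index binomial theorem; nothing to add there. For \eqref{RB} the paper does not give a proof at all --- it simply cites the literature on Rota--Baxter families --- so your formal-power-series argument is a genuine (and welcome) addition rather than a deviation. The bookkeeping is right: with $\hat f=P+P'$, $\hat g=R+R'$ one gets $P\hat g+\hat fR-\hat f\hat g=PR-P'R'$, the term $PR$ is fixed by $\pi_{<\alpha+\beta}$ because weighted degrees add, and $P'R'$ is annihilated because all its monomials have degree $\ge\alpha+\beta$; the only hypothesis you need, and correctly flag, is that the scaling components are strictly positive so that each truncation is a finite sum (true for the parabolic scaling used here, and implicitly needed already for \eqref{op} to make sense). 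One small point worth stating explicitly if you write this up: the identification $\mathrm{T}_{\gamma,x,\cdot}h=\pi_{<\gamma}\hat h$ applied to $h=Pg$ with $P$ a polynomial uses the Leibniz rule together with the fact that $P$ equals its own Taylor series at $x$, which is what justifies $\widehat{Pg}=P\hat g$.
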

\begin{proof}
The first identity \eqref{RB} is well-known in the literature and corresponds to the notion of family of Rota--Baxter maps \cite{EFGBP2008}. We give a proof of the second one \eqref{identaylor} which is in fact essential for the sequel. One has
\allowdisplaybreaks 
\begin{equs}
	\sum_{|\ell|_{\s} < \alpha  } \frac{(y-\bar x)^{\ell}}{\ell !} \mathrm{T}_{\alpha- |\ell|_{\s},x,\bar x} [D^{\ell} f] 
	& = \sum_{|\ell|_{\s} < \alpha  } \frac{(y-\bar x)^{\ell}}{\ell !}  \sum_{|k|_{\s} < \alpha - |\ell|_{\s} } \frac{(\bar x- x)^{k}}{k !}  D^{\ell+k} f(x)
	\\ & = \sum_{|\ell|_{\s} < \alpha  } \frac{(y-\bar x + \bar x - x)^{\ell}}{\ell !}   D^{\ell} f(x)  
	\\ & = \mathrm{T}_{\alpha,x,y} f.
\end{equs}
\end{proof}

\begin{remark} \label{extRB}
Identity~\eqref{RB} is also true for $ \alpha, \beta \in \R_- $ by setting $ \mathrm{T}_{\alpha,x,\bigcdot} f = 0 $ whenever $ \alpha \leq 0 $.
\end{remark}

We consider now linear maps from $H^{\CD}$ to $\CH$ parametrised by finite sets of elements in $\R^{d+1}$. Of particular interest is the family of algebra morphisms 
\begin{equs}
	\Phi = (\varphi_{x_1,\ldots,x_n})_{ x_1,\ldots,x_n \in \R^{d+1}},
	\quad  
	\varphi_{x_1,\ldots,x_n} : H^{\CD} \rightarrow \CH.
\end{equs}
 For given  $ x_1,...,x_n $, $ \varphi_{x_1,...,x_n} $ is a function that we will also denote in the sequel by:
\begin{equs}
\varphi_{x_1,...,x_n} : y \mapsto \varphi_{x_1,...,x_n,y} \in \R.
\end{equs}
The variables $ x_1,..., x_n $ correspond to some quantities fixed within the definition of $ \varphi $. 
When $ n=1 $, it is interpreted as the fact that some origin has been fixed and that characters $\varphi_{x_1} : H^{\CD} \rightarrow \CH$ contain partially a re-centering. Indeed, they are defined on $ \bullet^k $ and the natural evaluation should give the polynomial function associated to $X^k$. Therefore, our polynomial functions will be re-centered around this parameter.

\medskip

We consider now the linear maps 
\begin{equs}
\varphi_{\bar x} \colon H^{\CD} \rightarrow \CH, \quad \varphi^{-}_{x, \bar x } : H^{\CD}_+ \rightarrow \CH_x^{-}, \quad \bar \varphi_{x,\bar x} : H^{\CD} \rightarrow \CH, \quad \varphi^{+}_{x, \bar x}: H^{\CD} \rightarrow \CH 
\end{equs} in $ \Phi$ and follow Proposition \ref{Birkhoff_comodule} in setting up a Bogoliubov-type recursion for the counter term map. With our notations, one has for every $ y \in \R^{d+1} $:
\begin{equs}
\varphi_{\bar x,y}, \, \varphi^{-}_{x, \bar x ,y}, \,  \varphi_{x,\bar x,y}, \, \varphi^{+}_{x, \bar x,y} \in \R. 
\end{equs}
Here we see any map $\varphi^{-}_{x, \bar x } : H^{\CD}_+ \rightarrow \CH_x^{-}$ as element of $ \Phi $ because these maps admit a natural extension to $ H^{\CD} $ by setting them to be zero outside $ H^{\CD}_+ $. It is assumed that $ \varphi_{\bar x} $ is a character parametrised by ${\bar x} \in \R^{d+1} $. The map $ \bar \varphi_{x,\bar x}  $ plays the role of Bogoliubov's preparation map obtained from the modified reduced coproduct $ \Delta^{\! +}_{\scriptscriptstyle{\mathrm{red}}} $. Upon applying the Taylor jet operator, Bogoliubov's preparation map gives the counter term character, $ \varphi^{-}_{x, \bar x }$. Eventually, the renormalised character $ \varphi^{+}_{x, \bar x} $ follows from a convolution between $ \varphi^{-}_{x, \bar x} $ and $\varphi_{\bar x }$ using the coaction $  \hat \Delta^{\! +} $. We will skip the injection from $H^{\CD}_+$ into $H^{\CD}$ for notational clarity. 

\begin{definition} \label{Bogoliubov}
Let $\varphi_{\bar x} \in \Phi $, ${\bar x} \in \R^{d+1} $ be a character. We set up the following Bogoliubov-type recursion with respect to the points $ x, y \in \R^{d+1} $, $ \widehat\tau \in \CR \CT^{\CD} $: 
\allowdisplaybreaks
\begin{equs}  \label{Bogoliubovrecursion}
\begin{aligned}
	\bar \varphi_{x,\bar x,y} (\widehat\tau) 
	& = \varphi_{\bar x, y }(\widehat\tau) 
		+ \sideset{}{^+}\sum_{(\widehat\tau)} \varphi_{\bar x, y }(\widehat\tau') \varphi^{-}_{ x, \bar x, \bar x }(\widehat\tau'') \\
	\varphi^{-}_{x, \bar x, y }(\widehat\tau) 
	& = - \mathrm{T}_{|\widehat\tau|_{\s},x,y } \left(  \bar \varphi_{x,\bar x,\bigcdot} (\widehat\tau) \right) \\
	\varphi^{+}_{x, \bar x,y}
	& = \varphi_{\bar x,y} \star \varphi^{-}_{{ x, \bar x, \bar x } }
	= \left(   \varphi_{\bar x, y }\otimes {\varphi^{-}_{ x, \bar x, \bar x }} \right) \hat \Delta^{\! +}.
\end{aligned}    
\end{equs}
\end{definition}

\begin{remark} 
Using the recursive formulation of the modified reduced coproduct \eqref{recursivereduced}, one gets the following identity for $ \bar \varphi_{x,\bar x,\bigcdot} $
\begin{equs} \label{RCidentity}
	\bar \varphi_{x,\bar x,y} (\CI_{(\Labhom,p)}(\widehat\tau))  
	= \varphi_{\bar x, y }(\CI_{(\Labhom,p)}(\widehat\tau)) 
		+\left( \varphi_{{ \bar x, y }} \CI_{(\Labhom,p)} \otimes \varphi^{-}_{{ x, \bar x, \bar x }}  \right) 
		\hat \Delta^{\! +} \widehat\tau.
\end{equs}
\end{remark}

\medskip 

 The central point is to verify the following theorem that gather various important properties:
\begin{theorem} \label{main_propoerties} Within the set up of Definition~\ref{Bogoliubov}, one has
\begin{enumerate}
\item The counter term map $ \varphi^{-}_{x,\bar x} $ is an algebra morphism from $ H^{\CD}_{+} $ into $ \CH_x^{-} $.

\item The renormalised map  $ \varphi^{+}_{x, \bar x} $ is an algebra morphism from $ H^{\CD} $ into $ \CH $, which sends  trees from $ \CT^{\CD}_+ $ into $ \CH_x^{+} $.

\item One can prove that the maps $ \varphi^{+}_{x,\bar x}   $,  and $ \bar{\varphi}_{x,\bar x} $ are invariant in their second
subscript. The map $ \varphi^{-}_{x,\bar x} $ has also this invariance but only on decorated trees with zero node decoration.
\end{enumerate}
\end{theorem}
\begin{proof} The first two points are described in Theorem~\ref{main theorem}. The third point is covered in Theorem~\ref{invariant bar x}.
\end{proof}

\begin{assumption} \label{assumpt1}
We assume that the family of characters $ (\varphi_{\bar x})_{ \bar x \in \R^{d+1}} $ satisfies:
\begin{equs}
	 \varphi_{\bar x,y} (X_i)   = y_i - \bar x_i, 
 	\quad  
 	\varphi_{\bar x, \bigcdot}(\CI_{(\Labhom,\ell)}(\widehat\tau)) 
	= D^{\ell} \varphi_{\bar x, \bigcdot}(\CI_{(\Labhom,0)}(\widehat\tau)).
\end{equs}
\end{assumption}

The first identity in Assumption~\ref{assumpt1} corresponds to interpreting the point $ \bar x $ as a re-centering of polynomials.
The second identity shows that adding decoration to an edge amounts in fact to taking derivatives. This identity is crucial and combined with \eqref{identaylor} allows us to prove the character property of the counter term $ \varphi^{-}_{x,\bar x} $. For the rest of this section, we consider a family of characters $ (\varphi_{\bar x})_{ \bar x \in \R^{d+1}} $ satisfying this assumption.

From \eqref{RCidentity} and Assumption~\ref{assumpt1}, we have for $ \CI_{(\Labhom,k+ \ell)}(\widehat\tau) \in \mathcal{T}^{\mathcal{D}}_+ $  that
\begin{equs} \label{phideri}
	 \bar \varphi_{x,\bar x,\bigcdot}(\CI_{(\Labhom,k+ \ell)}(\widehat\tau)) 
	 = D^{\ell} \bar \varphi_{x,\bar x,\bigcdot}(\CI_{(\Labhom,k)}(\widehat\tau)).
\end{equs}

Assumption~\ref{assumpt1} gives also the behaviour of Bogoliubov's recursion on the polynomials. From the primitiveness, $ \Delta^{\! +}_{\scriptscriptstyle{\mathrm{red}}}  X^k = 0$, one gets:
\begin{equs}
\bar \varphi_{x,\bar x,y}(X^k) = \varphi_{\bar x,y}(X^k) = (y-\bar{x})^k.
\end{equs}
Then, 
\begin{equs}
	\varphi^{-}_{ x, \bar x, y}(X^k) 
	&  =  - \mathrm{T}_{|k|_{\s},x,y } \left(  \bar \varphi_{x,\bar x,\bigcdot} (X^k) \right)
 	    = - \mathrm{T}_{|k|_{\s},x,y } \left( (\bigcdot - \bar x)^k \right)\\ 
	& = - \sum_{|\ell|_{\s} < |k|_{\s}} \binom{k}{\ell} (y-x)^{\ell} (x - \bar x)^{k-\ell}.
\end{equs}
For $ y = \bar x $, we then have
\begin{equs}
	\varphi^{-}_{ x, \bar x, \bar x}(X^k) 
	&  = - \sum_{|\ell|_{\s} < |k|_{\s}} \binom{k}{\ell} (\bar x-x)^{\ell} (x - \bar x)^{k-\ell}
	\\ & =  - \sum_{|\ell|_{\s} \leq |k|_{\s}} \binom{k}{\ell} (\bar x-x)^{\ell} (x - \bar x)^{k-\ell} + (\bar x - x)^{k}
	\\ & = (\bar x - x)^{k}.
\end{equs}
At the end,
\begin{equs}
	\varphi^{+}_{x, \bar x, y }(X^k) 
	& = \left(   \varphi_{\bar x, y }\otimes {\varphi^{-}_{ x, \bar x, \bar x }} \right) \hat \Delta^{\! +} X^k
	\\&= \sum_{|\ell|_{\s} \leq |k|_{\s}} \binom{k}{\ell} \varphi_{\bar x, y}(X^{\ell}) \varphi^{-}_{ x, \bar x, \bar x}(X^{k-\ell})
 	\\ & = \sum_{|\ell|_{\s} \leq |k|_{\s}} \binom{k}{\ell} (y-\bar x)^{\ell} (\bar x - x)^{k-\ell}
	 \\ & = (y-x)^k.
\end{equs}

\begin{theorem} \label{main theorem}
The map $ \varphi^{-}_{x, \bar x} $ is an algebra morphism from $ H^{\CD}_{+} $ into $ \CH_x^{-} $ and $ \varphi^{+}_{x, \bar x} $ is an algebra morphism from $ H^{\CD} $ into $ \CH $. 
\end{theorem}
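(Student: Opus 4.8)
The plan is to prove the two multiplicativity statements by induction on a suitable grading (say the bigrading $(|\Labe_2|_{\s}, |N_T \setminus \{\varrho_T\}| + |E_T|)$ used for $\Deltap$, or simply the number of edges), treating the counter term map $\varphi^{-}_{x,\bar x}$ first and then deducing the claim for $\varphi^{+}_{x,\bar x}$. Throughout, the key inputs are: the Rota--Baxter-type identity \eqref{RB} for the Taylor jets $\mathrm{T}_{\cdot,x,\cdot}$ (extended to negative $\alpha$ as in Remark~\ref{extRB}), the reconstitution identity \eqref{identaylor}, Assumption~\ref{assumpt1}, the recursive form \eqref{recursivereduced} of $\Delta^{\! +}_{\scriptscriptstyle{\mathrm{red}}}$, and the comodule coassociativity $(\hat\Delta^{\!+}\otimes\id)\hat\Delta^{\!+} = (\id\otimes\bar\Delta^{\!+})\hat\Delta^{\!+}$ together with the fact that $\bar\Delta^{\!+}$ makes $H^{\CD}_+$ a Hopf algebra.

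\textbf{Step 1: $\varphi^{-}_{x,\bar x}$ is multiplicative on $H^{\CD}_+$.} The base cases are the generators $X_i$ and planted trees $\CI_{(\Labhom,p)}(\widehat\tau)$ with $|\CI_{(\Labhom,p)}(\widehat\tau)|_{\s}>0$; the polynomial computations just before the theorem already handle $X^k$. For the inductive step I would take $\widehat\sigma, \widehat\tau \in \CR\CT^{\CD}_+$ and compare $\varphi^{-}_{x,\bar x,y}(\widehat\sigma\cdot\widehat\tau)$ with $\varphi^{-}_{x,\bar x,y}(\widehat\sigma)\,\varphi^{-}_{x,\bar x,y}(\widehat\tau)$. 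Since $\bar\Delta^{\!+}$ is a coproduct on $H^{\CD}_+$ (hence an algebra morphism) and since, by Assumption~\ref{assumpt1} and \eqref{phideri}, adding a polynomial decoration at the root amounts to a derivative which \eqref{identaylor} converts correctly, the preparation map $\bar\varphi_{x,\bar x,y}$ applied to $\widehat\sigma\cdot\widehat\tau$ expands — using the recursion \eqref{Bogoliubovrecursion} and the inductive hypothesis on the strictly smaller trees appearing in $\bar\Delta^{\!+}$ — into a sum of products of the form $(\mathrm{T}_{\alpha,x,\cdot}\,\square)(\mathrm{T}_{\beta,x,\cdot}\,\square)$. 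Then \eqref{RB} collapses this into a single Taylor jet $\mathrm{T}_{\alpha+\beta,x,\cdot}$ of the product of the two preparation maps, matching the definition of $\varphi^{-}_{x,\bar x,y}(\widehat\sigma\cdot\widehat\tau)$ because degrees are additive, $|\widehat\sigma\cdot\widehat\tau|_{\s} = |\widehat\sigma|_{\s} + |\widehat\tau|_{\s}$. The fact that the output lands in $\CH_x^{-}$ is immediate from \eqref{op}.

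\textbf{Step 2: $\varphi^{+}_{x,\bar x}$ is multiplicative on $H^{\CD}$.} Here I would use $\varphi^{+}_{x,\bar x,y} = (\varphi_{\bar x,y} \otimes \varphi^{-}_{x,\bar x,\bar x})\hat\Delta^{\!+}$, the fact that $\hat\Delta^{\!+}$ is an algebra morphism $\CT^{\CD}\to\CT^{\CD}\otimes\CT^{\CD}_+$ (it is a coaction for the Hopf algebra $H^{\CD}_+$, and $\mathcal M$ is commutative so the tensor product algebra behaves well), that $\varphi_{\bar x,y}$ is a character by hypothesis, and that $\varphi^{-}_{x,\bar x,\bar x}$ is a character by Step~1. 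Composing these three multiplicative facts gives multiplicativity of $\varphi^{+}_{x,\bar x}$ directly, with no further combinatorics. That $\varphi^{+}_{x,\bar x}\circ\iota$ sends $\CT^{\CD}_+$ into $\CH_x^{+}$ follows as in Proposition~\ref{Birkhoff_comodule} (and is consistent with the computation $\varphi^{+}_{x,\bar x,y}(X^k) = (y-x)^k$, which vanishes at $y=x$ for $k\neq 0$), but strictly it is the $\bar x$-invariance/target statements that require the extra argument deferred to Theorem~\ref{invariant bar x}; for the present theorem only the algebra morphism property is asserted.

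\textbf{Main obstacle.} The delicate point is Step~1: one must carefully bookkeep the mixed terms produced when $\Delta^{\! +}_{\scriptscriptstyle{\mathrm{red}}}$ (equivalently $\hat\Delta^{\!+}$ minus the polynomial part) is applied to a product, and in particular check that the evaluation point mismatch in \eqref{Bogoliubovrecursion} — where $\bar\varphi$ uses the running point $y$ while $\varphi^{-}$ in the correction term is frozen at $\bar x$ — is exactly reconciled by \eqref{identaylor}, so that the Rota--Baxter identity \eqref{RB} can be applied with the correct orders $\alpha = |\widehat\sigma|_{\s}$, $\beta = |\widehat\tau|_{\s}$. The projector $\pi_+$ appearing in $\hat\Delta^{\!+}$ versus its absence in the global-degree truncation of the twisted antipode also has to be tracked: one needs that the branches outgoing from the root of $\widehat\sigma\cdot\widehat\tau$ are precisely the union of those of $\widehat\sigma$ and of $\widehat\tau$, so that $\pi_+$ commutes with the product in the relevant sense and no spurious terms of non-positive degree survive. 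Once these bookkeeping points are settled, the rest is the routine induction sketched above.
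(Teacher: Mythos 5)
Your proposal is correct and follows essentially the same route as the paper: an induction in which the preparation map of a product is rewritten, via Assumption~\ref{assumpt1} and the reconstitution identity \eqref{identaylor} (which is exactly what reconciles the frozen evaluation at $\bar x$ with the running point), into the form to which the Rota--Baxter identity \eqref{RB} applies with orders $|\widehat\sigma|_{\s}$ and $|\widehat\tau|_{\s}$, after which multiplicativity of $\varphi^{+}_{x,\bar x}$ follows formally from the coaction being an algebra morphism and $\varphi_{\bar x}$, $\varphi^{-}_{x,\bar x,\bar x}$ being characters. The "main obstacle" you flag is precisely the content of the paper's displayed computation, so nothing is missing.
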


\begin{proof}
We proceed by induction on the number of edges and the decoration at the root. We consider trees $ \widehat\tau_1,  \widehat\tau_2 \in \mathcal{RT}_{+}^{\mathcal{D}} $. According to our convention they both admit symbolic representations of the form:
\begin{equs}
	\widehat\tau_1 = X^{n}\prod_i \CI_{(\Labhom_i, k_i)}(\widehat\tau_{1i}), 
	\quad 
	\widehat\tau_2 = X^{\bar n} \prod_j \CI_{(\bar{\Labhom}_j, \bar{k}_j)}(\widehat\tau_{2j}).
\end{equs}
We expand the recursive expression of the counter term $ \varphi^{-}_{x,\bar x}  $ giving for $ y \in \R^{d+1} $:
\begin{equs}
	 \varphi^{-}_{x,\bar x, y}  (\widehat\tau_1\widehat\tau_2) 
	& = - \mathrm{T}_{|\widehat\tau_1\widehat\tau_2|_{\s},x,y}\left( \varphi_{\bar x, \bigcdot}(\widehat\tau_1\widehat\tau_2) 
	+ \sideset{}{^+}\sum_{(\widehat\tau_1\widehat\tau_2)}  
	\varphi_{\bar x, \bigcdot}\left(\lbrace \widehat\tau_1\widehat\tau_2 \rbrace'\right) 
	\varphi^{-}_{x,\bar x, \bar x} \left(\lbrace \widehat\tau_1\widehat\tau_2 \rbrace''\right) \right) \\
	& = - \mathrm{T}_{|\widehat\tau_1\widehat\tau_2|_{\s},x,y} \left(  \left\lbrace \varphi_{\bar x,\bigcdot}(\widehat\tau_1) 
	+ \sideset{}{^+}\sum_{(\widehat\tau_1)} \varphi_{\bar x, \bigcdot}(\widehat\tau_1') 
	\varphi^{-}_{x,\bar x, \bar x}\left(\widehat\tau_1'' \right) \right\rbrace \right. \\ 
	& \left.\left\lbrace \varphi_{\bar x, \bigcdot}(\widehat\tau_2) 
		+ \sideset{}{^+}\sum_{(\widehat\tau_2)} \varphi_{\bar x, \bigcdot}(\widehat\tau_2') 
		\varphi^{-}_{x,\bar x, \bar x}\left(\widehat\tau_2'' \right) \right\rbrace \right.\\ 
	& + \sum_{\ell_1,\ldots, \ell_{d+1} \atop |k|_{\s} < |n|_{\s}  } \binom{n}{k}  \varphi_{\bar x, \bigcdot} \left(\frac{X^{\sum_i \ell_i + k}}{ \ell !} \right) 		\varphi^{-}_{x,\bar x, \bar x} \left( X^{n-k} \prod_j \CI_{(\Labhom_j, k_j + \ell_j)}(\widehat\tau_{1j}) \right) \\ 
	& \left\lbrace \varphi_{\bar x,\bigcdot}(\widehat\tau_2) 
	+  \sideset{}{^+}\sum_{(\widehat\tau_2)}  \varphi_{\bar x, \bigcdot} \left(\widehat\tau_2' \right) 
		\varphi^{-}_{x,\bar x, \bar x}\left(\widehat\tau_2'' \right) \right\rbrace  \\
	& \left. + \sum_{\ell_1,\ldots, \ell_{d+1} \atop |k' |_{\s} < |\bar n|_{\s}} \binom{\bar n}{k'}  
	\varphi_{\bar x, \bigcdot} \left(\frac{X^{\sum_i \ell_i + k'}}{\ell !} \right) 
	\varphi^{-}_{x,\bar x, \bar x}\left( X^{\bar n  - k'} \prod_j \CI_{(\bar \Labhom_j, \bar k_j + \ell_j)}(\widehat\tau_{2j}) \right) \right.\\ 
	& \left.  \left\lbrace \varphi_{\bar x, \bigcdot}(\widehat\tau_1) 
	+ \sideset{}{^+}\sum_{ (\widehat\tau_1)} \varphi_{\bar x, \bigcdot} \left(\widehat\tau_1' \right) \varphi^{-}_{x,\bar x, \bar x}\left( \widehat\tau_1'' \right) \right\rbrace \right), 
\end{equs}
where $  \ell ! = \prod_i \ell_i ! $ and we have used several times the Sweedler's notation for the reduced coaction $\Delta^{\! +}_{\scriptscriptstyle{\mathrm{red}}}$ see \eqref{Sweedler_reduced}.  The computation above illustrates the fact that the reduced coaction $\Delta^{\! +}_{\scriptscriptstyle{\mathrm{red}}}$ is not multiplicative and one has to add more terms involving monomials of the form $ X^{\sum_i \ell_i + k} $ and $ X^{\sum_i \ell_i + k'} $.
By applying the induction hypothesis on each $  X^{n-k} \prod_i \CI_{ (\Labhom_i, k_i + \ell_i)}(\widehat\tau_{1i}) $, we obtain:
\begin{equs}
	\varphi^{-}_{x,\bar x, \bar x}\left( X^{n-k} \prod_j\CI_{( \Labhom_j,  k_j + \ell_j)}( \widehat\tau_{1j}) \right) 
	= \varphi^{-}_{x,\bar x, \bar x}\left(  X^{n-k} \right) 
	\prod_j\varphi^{-}_{x,\bar x, \bar x}\left(  \CI_{( \Labhom_j,  k_j + \ell_j)}( \widehat\tau_{1j}) \right) .
\end{equs}

We use identity~\eqref{identaylor} to get for each  $ \CI_{(\Labhom_i,k_i)}( \widehat\tau_{1i})   $ with $ \alpha_i := | \CI_{(\Labhom_i,k_i)}( \widehat\tau_{1i}) |_{\s} $:
\begin{equs}
	\varphi^{-}_{x, \bar x, y}\left(  \CI_{(\Labhom_i, k_i)}(\widehat\tau_{1i}) \right)  
	& = - \mathrm{T}_{\alpha_i,x, y } \left(  \bar \varphi_{x,\bar x,\bigcdot} (\CI_{(\Labhom_i,k_i)}(\widehat\tau_{1i})) \right)\\ 
	& = - \sum_{|\ell_j |_{\s} < \alpha_i} \frac{(y-\bar x)^{\ell_j}}{\ell_j !} 
	\mathrm{T}_{\alpha_i- \ell_j,x,\bar x} \big[D^{\ell_j}  \bar \varphi_{x,\bar x,\bigcdot} (\CI_{(\Labhom_i, k_i)}(\widehat\tau_{1i}))\big ]\\ 
	& = - \sum_{|\ell_j |_{\s} < \alpha_i } \frac{(y-\bar x)^{\ell_j}}{\ell_j !}
	 \mathrm{T}_{\alpha_i- \ell_j,x,\bar x} 
	 \big[  \bar \varphi_{x,\bar x,\bigcdot} (\CI_{(\Labhom_i, k_i+ \ell_j)}(\widehat\tau_{1i}))\big ]\\ 
	 & = \sum_{|\ell_j |_{\s} < \alpha_i} \varphi_{\bar x,\bigcdot} \left(\frac{X^{\ell_j}}{\ell_j !} \right) 
	 \varphi^{-}_{x, \bar x, \bar x}\left(  \CI_{(\Labhom_i, k_i + \ell_j)}(\widehat\tau_{1i}) \right). 
\end{equs}
On the other hand we get:
\begin{equs}
	\varphi^{-}_{x, \bar x, \bigcdot}\left( X^n \right) 
	= \sum_{ |k|_{\s} < | n |_{\s} } \binom{ n}{k}  \varphi_{\bar x, \bigcdot} \left( X^{ k} \right) 
	\varphi^{-}_{x, \bar x,\bar x}\left( X^{n-k} \right).  
\end{equs}

We conclude by applying identity~\eqref{RB}
\begin{equs}
	\varphi^{-}_{x,\bar x, y}  (\widehat\tau_1\widehat\tau_2) 
	& = - \mathrm{T}_{|\widehat\tau_1\widehat\tau_2|_{\s},x,y} 
	\left(\bar \varphi_{ x, \bar x, \bigcdot}(\widehat\tau_1)  \bar \varphi_{x, \bar x, \bigcdot}(\widehat\tau_2)  
	- \mathrm{T}_{|\widehat\tau_1|_{\s},x,\bigcdot} \left( \bar \varphi_{x,\bar x,\bigcdot}(\widehat\tau_1) \right) 
	\bar \varphi_{x,\bar x, \bigcdot}(\widehat\tau_2) \right. \\ 
	& \qquad - \left. \bar \varphi_{x,\bar x, \bigcdot}(\widehat\tau_1) 
	\mathrm{T}_{|\widehat\tau_2|_{\s},x,\bigcdot} \left( \bar \varphi_{x,\bar x, \bigcdot}(\widehat\tau_2) \right) \right) \\ 
	& =  \mathrm{T}_{|\widehat\tau_1|_{\s},x,y} \left( \bar \varphi_{x, \bar x, \bigcdot}(\widehat\tau_1) \right)  
	\mathrm{T}_{|\widehat\tau_2|_{\s},x,y} \left( \bar \varphi_{x, \bar x,\bigcdot}(\widehat\tau_2) \right) \\
	& = \varphi^{-}_{x, \bar x, y}(\widehat\tau_1) \varphi^{-}_{x, \bar x,y}(\widehat\tau_2).
\end{equs}
 The fact that $ \varphi^{+}_{x,\bar x, y} $ is an algebra morphism follows from its definition together with $ \varphi_{\bar x} $ and $ \varphi^{-}_{x,\bar x, y} $ being algebra morphisms. 
\end{proof}

\begin{remark} 
From the proof above, one can see that for a tree $ \widehat\tau $ with a negative sub-branch we get $ \varphi^{-}_{x, \bar x, \bigcdot}(\widehat\tau) = 0 $ by using the extension mentioned in remark~\ref{extRB}.
\end{remark}

In the next proposition, we connect the twisted antipode, $\tilde \CA_+ \colon  \CT^{\CD}_+ \to  \CT^{\CD} $,  given in \eqref{twistedantipode} with the character $ \varphi^{-}_{x,\bar x,\bar x}  \colon H^{\CD}_{+} \to \R$ constructed through the Bogoliubov-type recursion.

\begin{proposition} \label{twisted_antipode_f} 
Let $\varphi_{\bar x, x} \colon H^{\CD} \to \R$ be a character. One has the following identity between characters from  $H^{\CD}_{+}$ to $\R$
\begin{equs}
	\varphi^{-}_{x,\bar x, \bar x} = \varphi_{\bar x, x} \tilde \CA_+.
\end{equs}
\end{proposition}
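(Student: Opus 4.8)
The plan is to prove the identity $\varphi^{-}_{x,\bar x, \bar x} = \varphi_{\bar x, x} \circ \tilde{\CA}_+$ by induction on the number of edges and on the decoration at the root, exactly paralleling the recursive structures that define the two sides. The base case is the polynomial generators: on $X^k$ the twisted antipode satisfies (by iterating \eqref{twistedantipode} with $\widehat\tau$ trivial, or directly from $\tilde\CA_+ X_i = -X_i$ together with the fact that $\tilde\CA_+$ is an algebra morphism applied to the primitive-after-modification structure) that $\varphi_{\bar x,x}(\tilde\CA_+ X^k) = (\bar x - x)^k$, which is precisely the value of $\varphi^{-}_{x,\bar x,\bar x}(X^k)$ computed in the displayed calculation just before Theorem~\ref{main theorem}. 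So the two sides agree on polynomials.

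For the inductive step I would take a planted tree $\CI_{(\Labhom,p)}(\widehat\tau)$ and compare the two recursive descriptions. On one side, \eqref{twistedantipode} gives
\[
	\tilde{\CA}_+ \CI_{(\Labhom, p)}(\widehat{\tau})
	= - \sum_{|\ell|_{\s} \leq | \CI_{(\Labhom,p)}(\widehat{\tau}) |_{\s}} \frac{(-X)^{\ell}}{\ell!}  \mathcal{M} \left(  \CI_{(\Labhom, p + \ell)}  \otimes \tilde{\CA}_+ \right) \hat{ \Delta}^{\! +}\widehat{\tau},
\]
so applying the character $\varphi_{\bar x,x}$, using that it is multiplicative and Assumption~\ref{assumpt1} (which turns $\varphi_{\bar x,x}(\CI_{(\Labhom,p+\ell)}(\cdot))$ into $D^{\ell}\varphi_{\bar x,x}(\CI_{(\Labhom,p)}(\cdot))$ and $\varphi_{\bar x,x}(X^\ell) = (x-\bar x)^\ell$), one recognises the right-hand side as a Taylor-jet expression $-\mathrm{T}_{|\CI_{(\Labhom,p)}(\widehat\tau)|_{\s},x,\bar x}$ applied to something built from $\varphi_{\bar x,\cdot}(\CI_{(\Labhom,p)}(\cdot))$ convolved against $\varphi_{\bar x,x}\circ\tilde\CA_+$ via $\hat\Delta^{+}$. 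On the other side, the Bogoliubov recursion \eqref{Bogoliubovrecursion} together with the recursive form \eqref{RCidentity} of the modified reduced coproduct gives
\[
	\varphi^{-}_{x, \bar x, \bar x}(\CI_{(\Labhom,p)}(\widehat\tau))
	= - \mathrm{T}_{|\CI_{(\Labhom,p)}(\widehat\tau)|_{\s},x,\bar x}\left( \varphi_{\bar x, \cdot }(\CI_{(\Labhom,p)}(\widehat\tau))
		+\left( \varphi_{\bar x, \cdot } \CI_{(\Labhom,p)} \otimes \varphi^{-}_{x, \bar x, \bar x}  \right) \hat \Delta^{\! +} \widehat\tau\right).
\]
By the induction hypothesis $\varphi^{-}_{x,\bar x,\bar x}$ agrees with $\varphi_{\bar x,x}\circ\tilde\CA_+$ on the (strictly smaller) trees appearing in $\hat\Delta^{+}\widehat\tau$, so the arguments of the two Taylor jets coincide, and the identity follows. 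The multiplicative compatibility — that both $\varphi^{-}_{x,\bar x,\bar x}$ (by Theorem~\ref{main theorem}) and $\varphi_{\bar x,x}\circ\tilde\CA_+$ (since $\tilde\CA_+$ is an algebra morphism and $\varphi_{\bar x,x}$ a character) are algebra morphisms — means it suffices to check the identity on the generators $X^k$ and $\CI_{(\Labhom,p)}(\widehat\tau)$, which is exactly what the induction does.

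The main obstacle I anticipate is bookkeeping in the comparison of the $\ell$-summations: the twisted antipode sums over $|\ell|_{\s} \le |\CI_{(\Labhom,p)}(\widehat\tau)|_{\s}$ with weights $(-X)^\ell/\ell!$ and the coaction $\hat\Delta^{+}$ already absorbed into the $\CI_{(\Labhom,p+\ell)}$ factor, whereas the Taylor jet on the Bogoliubov side produces its own $\ell$-sum with weights $(y-x)^\ell/\ell!$ (here $y = \bar x$, so $(\bar x - x)^\ell$) from the definition \eqref{op}. Reconciling these requires carefully using Assumption~\ref{assumpt1} to move derivatives $D^\ell$ off the character and onto the edge decoration, and using that $\varphi_{\bar x,x}$ is multiplicative to factor the $X^\ell$-contributions out as $(x-\bar x)^\ell$ — the sign $(-X)^\ell$ versus $(\bar x - x)^\ell = (-(x-\bar x))^\ell$ is what makes the two Taylor expansions match. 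One also needs to confirm that the degree cutoff $|\ell|_{\s} \le |\CI_{(\Labhom,p)}(\widehat\tau)|_{\s}$ in \eqref{twistedantipode} is compatible with the cutoff $|\ell|_{\s} < \alpha$ hidden inside $\mathrm{T}_{\alpha,x,\bar x}$ once the degree of $\widehat\tau$ is accounted for; this is where the identity \eqref{identaylor} proved in Lemma~\ref{lem:RBb} does the work, exactly as in the proof of Theorem~\ref{main theorem}. Apart from this index-matching, the argument is a routine structural induction.
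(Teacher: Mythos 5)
Your proposal is correct and follows essentially the same route as the paper's proof: induction on trees, reduction to planted trees by multiplicativity, Assumption~\ref{assumpt1} to convert the edge-decoration shift $\CI_{(\Labhom,p+\ell)}$ into derivatives $D^\ell$, and recognition of the resulting $\ell$-sum as the Taylor jet $\mathrm{T}_{\alpha,x,\bar x}$ applied to Bogoliubov's preparation map. The only cosmetic difference is that the index-matching you worry about does not actually require \eqref{identaylor}: once the derivatives are moved onto the character, the sum $\sum_{\ell}\frac{(\bar x-x)^{\ell}}{\ell!}(D^{\ell}\cdot)(x)$ is literally $\mathrm{T}_{\alpha,x,\bar x}$ by definition \eqref{op}, which is all the paper uses.
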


\begin{proof} 
We proceed by induction on trees. Recall that Assumption~\ref{assumpt1} is in place. By multiplicativity, we just consider a planted tree $ \CI_{(\Labhom,k)}(\widehat\tau) $ and set $ \alpha = |\CI_{(\Labhom,k)}(\widehat\tau)  |_{\s} $:
\begin{equs}
	 \varphi_{\bar x, x} \left( \tilde \CA_+ \CI_{(\Labhom,p)}(\widehat\tau) \right) 
	 = - \sum_{|\ell|_{\s} \leq \alpha } \frac{\varphi_{\bar x, x}(-X)^{\ell}}{\ell!}   
	 \left(\varphi_{\bar x, x} \CI_{(\Labhom, p + \ell)}  \otimes 
	 \varphi_{\bar x, x}\tilde{\CA}_+ \right) \hat{ \Delta}^{\!+} \widehat\tau .
\end{equs}
We use the inductive hypothesis and $ \varphi_{\bar x, \bigcdot} (\CI_{(\Labhom, p + \ell)}(\widehat \tau)) = D^{\ell} \varphi_{\bar x,\bigcdot} (\CI_{(\Labhom, p)}(\widehat \tau)) $ to get
\begin{equs}
	 \varphi_{\bar x, x} \left( \tilde \CA_+ \CI_{(\Labhom,p)}( \widehat \tau) \right) 
	 & = - \sum_{|\ell|_{\s} \leq \alpha } \frac{(\bar x - x)^{\ell}}{\ell!}  \left(  \left( D^{\ell}   \varphi_{\bar x, \bigcdot}\big( \CI_{(\Labhom, p)} (\bigcdot)\big)\right)(x) \otimes \varphi^{-}_{x,\bar x, \bar x} \right) \hat{ \Delta}^{\!+} \widehat \tau \\
 	& = - \left( \mathrm{T}_{\alpha,x,\bar x } \left( \varphi_{\bar x, \bigcdot}\big(\CI_{(\Labhom, p)} (\bigcdot)\big) \right)  
		\otimes \varphi^{-}_{x,\bar x, \bar x} \right) \hat{ \Delta}^{\!+} \widehat\tau \\
 	& = - \mathrm{T}_{\alpha,x,\bar x } \left( \varphi_{\bar x, \bigcdot}(\CI_{(\Labhom, p)} (\widehat\tau)) \right)  
 	- \left( \mathrm{T}_{\alpha,x,\bar x } \left( \varphi_{\bar x, \bigcdot} \right)  \otimes \varphi^{-}_{x,\bar x, \bar x} \right)
	 	\Delta^{\! +}_{\scriptscriptstyle{\mathrm{red}}}\CI_{(\Labhom, p)}(\widehat\tau)\\
 	& = - \mathrm{T}_{\alpha,x,\bar x } \left(  \bar \varphi_{x,\bar x,\bigcdot} (\CI_{(\Labhom,p)}(\widehat\tau)) \right) \\ 
	& = \varphi^{-}_{x, \bar x, \bar x }(\CI_{(\Labhom,p)}(\widehat \tau)).
\end{equs}
\end{proof}

Given a tree $ \widehat\tau = X^{n}\prod_i \CI_{(\Labhom_i, k_i)}(\widehat{\tau}_i), $ with $ \alpha_i = | \CI_{(\Labhom_i, k_i)}(\widehat\tau_i) |_{\s} $, we define for every $ y \in \R^{d+1} $:
\begin{equs}
	\big(\bar \varphi_{x,\bar x,y}  - \mathrm{T}_{|\bigcdot|_{\s},x,y } 
	& \left(  \bar \varphi_{x,\bar x}  \right)\big)(\widehat\tau)  
	:= \left( \bar \varphi_{x,\bar x,y}(X^n)  
	- \mathrm{T}_{|n|_{\s},x,y } \left(  \bar \varphi_{x,\bar x,\bigcdot} (X^n) \right) \right) \\ 
	& \prod_i  \Big(\bar \varphi_{x,\bar x,y}(\CI_{(\Labhom_i, k_i)}(\widehat\tau_i))  - \mathrm{T}_{\alpha_i,x,y } 
	\left(  \bar \varphi_{x,\bar x,\bigcdot} (\CI_{(\Labhom_i, k_i)}(\widehat\tau_i)) \right)\Big)
\end{equs}

\begin{proposition} \label{explicit phi plus} 
One has for every $ \widehat\tau \in \CT^{\CD} $:
\begin{equs}
	\varphi_{\bar x,x,y}^{+}(\widehat\tau) 
	= \left( \bar \varphi_{x,\bar x,y}  
		- \mathrm{T}_{|\bigcdot|_{\s},x,y } \left(  \bar \varphi_{x,\bar x} \right) \right)(\widehat\tau). 
\end{equs}
Therefore $ \varphi_{\bar x,x,\bigcdot}^{+}(\widehat\tau) $ belongs to $ \CH_x^{+} $.
\end{proposition}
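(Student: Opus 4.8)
The plan is to compute $\varphi^{+}_{\bar x,x,y}(\widehat\tau)$ directly from its definition $\varphi^{+}_{x,\bar x,y} = (\varphi_{\bar x,y}\otimes\varphi^{-}_{x,\bar x,\bar x})\hat\Delta^{\!+}$ and to reorganise the resulting sum so that it factors over the root-decoration $X^n$ and over the outgoing branches $\CI_{(\Labhom_i,k_i)}(\widehat\tau_i)$, matching the stated product formula for $(\bar\varphi_{x,\bar x,y} - \mathrm{T}_{|\cdot|_{\s},x,y}(\bar\varphi_{x,\bar x}))(\widehat\tau)$. Since both sides are, in the appropriate sense, multiplicative in the factors $X^n$ and $\CI_{(\Labhom_i,k_i)}(\widehat\tau_i)$ — the left-hand side because $\varphi^{+}_{x,\bar x}$ is an algebra morphism by Theorem~\ref{main theorem}, the right-hand side by the very definition displayed just before the proposition — it suffices to check the identity on $X^n$ and on a single planted tree $\CI_{(\Labhom,p)}(\widehat\tau)$. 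The case $\widehat\tau = X^n$ has already been carried out in the computation preceding Theorem~\ref{main theorem}, where it was shown that $\varphi^{+}_{\bar x,x,y}(X^n) = (y-x)^n$; one checks immediately that $\bar\varphi_{x,\bar x,y}(X^n) - \mathrm{T}_{|n|_{\s},x,y}(\bar\varphi_{x,\bar x,\cdot}(X^n)) = (y-\bar x)^n - \mathrm{T}_{|n|_{\s},x,y}((\cdot-\bar x)^n)$, and by the argument for $\varphi^{-}$ on $X^n$ this telescopes to $(y-x)^n$.

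For the planted case, I would expand $\hat\Delta^{\!+}\CI_{(\Labhom,p)}(\widehat\tau)$ using \eqref{coproduct}, namely $\hat\Delta^{\!+}\CI_{(\Labhom,p)}(\widehat\tau) = \one\otimes\CI_{(\Labhom,p)}(\widehat\tau) + (\CI_{(\Labhom,p)}\otimes\id)\hat\Delta^{\!+}\widehat\tau + \sum_{\ell\neq 0}\tfrac{X^\ell}{\ell!}\otimes\pi_+\CI_{(\Labhom,p+\ell)}(\widehat\tau)$, and apply $\varphi_{\bar x,y}\otimes\varphi^{-}_{x,\bar x,\bar x}$. The first term gives $\varphi^{-}_{x,\bar x,\bar x}(\CI_{(\Labhom,p)}(\widehat\tau))$, which by definition equals $-\mathrm{T}_{\alpha,x,\bar x}(\bar\varphi_{x,\bar x,\cdot}(\CI_{(\Labhom,p)}(\widehat\tau)))$; the middle term, using Assumption~\ref{assumpt1} to write $\varphi_{\bar x,\cdot}(\CI_{(\Labhom,p+\ell)}) = D^\ell\varphi_{\bar x,\cdot}(\CI_{(\Labhom,p)})$ together with \eqref{phideri} and \eqref{RCidentity}, will reassemble into $\bar\varphi_{x,\bar x,y}(\CI_{(\Labhom,p)}(\widehat\tau))$ plus correction terms; and the third term, by the same derivative identity, will combine with part of the middle term to produce exactly the truncated Taylor polynomial $\mathrm{T}_{\alpha,x,y}(\bar\varphi_{x,\bar x,\cdot}(\CI_{(\Labhom,p)}(\widehat\tau)))$ via identity \eqref{identaylor} applied at the base point $\bar x$ — this is the same bookkeeping already performed inside the proof of Proposition~\ref{twisted_antipode_f}. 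Collecting everything yields $\varphi^{+}_{\bar x,x,y}(\CI_{(\Labhom,p)}(\widehat\tau)) = \bar\varphi_{x,\bar x,y}(\CI_{(\Labhom,p)}(\widehat\tau)) - \mathrm{T}_{\alpha,x,y}(\bar\varphi_{x,\bar x,\cdot}(\CI_{(\Labhom,p)}(\widehat\tau)))$, which is the single-branch instance of the claimed formula. The final clause, that $\varphi^{+}_{\bar x,x,\cdot}(\widehat\tau)\in\CH_x^+$, then follows because each factor on the right is a smooth function minus its own Taylor jet at $x$, hence vanishes at $y=x$ (equivalently lies in $\CH_x^+$), and $\CH_x^+$ is closed under products.

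The main obstacle I anticipate is purely organisational rather than conceptual: making the reindexing in the middle and third terms precise enough that the "$f - \mathrm{T}f$" residuals separate cleanly into a product over branches. Concretely, one must check that the sum $\sum_{\ell\neq 0}$ in $\hat\Delta^{\!+}$ together with the branch-wise sums coming from $\hat\Delta^{\!+}\widehat\tau$ reorganise — after using $D^\ell\varphi_{\bar x,\cdot}(\CI_{(\Labhom,p)})=\varphi_{\bar x,\cdot}(\CI_{(\Labhom,p+\ell)})$ and the Leibniz-type structure of $\hat\Delta^{\!+}$ — into precisely the double application of \eqref{identaylor} that converts $\mathrm{T}_{\alpha,x,\bar x}[\,\cdot\,]$-terms into $\mathrm{T}_{\alpha,x,y}[\,\cdot\,]$. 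This is exactly the Rota–Baxter-family identity \eqref{RB} combined with the base-point shift \eqref{identaylor}, so once those are invoked correctly the cancellations are forced; the care needed is only in tracking the truncation thresholds $|\ell|_{\s} < \alpha$ against $|\ell|_{\s} \le \alpha$, exactly as in the $X^n$ computation above.
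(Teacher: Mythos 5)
Your proposal is correct and follows essentially the same route as the paper: reduce to $X^n$ and planted trees by multiplicativity of both sides, handle $X^n$ by the computation already done before Theorem~\ref{main theorem}, and for $\CI_{(\Labhom,p)}(\widehat\tau)$ expand $\hat\Delta^{\!+}$ and recombine the polynomial terms into $-\mathrm{T}_{\alpha,x,y}(\bar\varphi_{x,\bar x,\cdot}(\CI_{(\Labhom,p)}(\widehat\tau)))$ via \eqref{phideri} and the base-point shift \eqref{identaylor}, with \eqref{RCidentity} identifying the remaining piece as $\bar\varphi_{x,\bar x,y}(\CI_{(\Labhom,p)}(\widehat\tau))$. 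The only cosmetic difference is bookkeeping: in the paper's grouping the middle term is exactly $\bar\varphi$ (no correction terms) and the $\one\otimes\CI$ term is just the $\ell=0$ summand of the Taylor-jet expansion, and \eqref{RB} is not actually needed here, only \eqref{identaylor}.
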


\begin{proof} 
By multiplicativity, we just need to check this property for $ X_i $ and trees of the form $\CI_{(\Labhom,k)} (\widehat\tau)$. First one can check that
\begin{equs}
	 \bar \varphi_{x,\bar x,y} (X_i) - \mathrm{T}_{1,x,y } \left(  \bar \varphi_{x,\bar x,\bigcdot} (X_i) \right) 
 	& = \varphi_{\bar x,y} (X_i) -   \varphi_{\bar x,x} (X_i)   \\ 
	& = (y_i - \bar x_i) - ( x_i - \bar x_i)  
	   = y_i - x_i.
\end{equs}
From $ X_i $ being primitive follows that:
\begin{equs}
	 \varphi^{+}_{x, \bar x ,y}(X_i) 
	 =\left(   \varphi_{\bar x, y }\otimes \varphi^{-}_{ x, \bar x, \bar x } \right) \hat \Delta^{\! +} X_i = y_i -x_i.
\end{equs}
For $ \widehat\tau = \CI_{(\Labhom,k)} (\widehat\tau_1) $, we get
\begin{equs}
	\lefteqn{(  \varphi_{\bar x, y } 
	 \otimes \varphi^{-}_{ x, \bar x, \bar x } ) \hat \Delta^{\! +} \widehat\tau
	 = (  \varphi_{\bar x, y } 
	 \otimes \varphi^{-}_{ x, \bar x, \bar x } ) \hat \Delta^{\! +}\CI_{(\Labhom,k)} (\widehat\tau_1)} \\ 
	& = \left(  \varphi_{\bar x, y } \CI_{(\Labhom,k)} \otimes \varphi^{-}_{ x, \bar x, \bar x } \right) \hat \Delta^{\! +} \widehat\tau_1 
	+ \sum_{|\ell|_{\s} < | \widehat\tau |_{\s}} \left(  \varphi_{\bar x, y } \frac{X^{\ell}}{\ell !} \otimes \varphi^{-}_{ x, \bar x, \bar x } \CI_{(\Labhom,k+ \ell)}(\widehat\tau_1)\right)\\ 
	& = \bar \varphi_{x,\bar x,y} (\widehat\tau) - \sum_{|\ell|_{\s} < |\widehat\tau|_{\s}} \left(  \frac{(y - \bar x)^{\ell}}{\ell !}  
	\mathrm{T}_{|\widehat\tau|_{\s} - | \ell |_{\s},x,\bar x} \big[ \bar \varphi_{ x, \bar x, \bigcdot } \CI_{(\Labhom,k+ \ell)}(\widehat\tau_1)\big]\right) \\ 
	& =  \bar \varphi_{x,\bar x,y} (\widehat \tau) - \mathrm{T}_{|\hat \tau|_{\s},x,y } \left(  \bar \varphi_{x,\bar x,\bigcdot} (\widehat\tau) \right),
\end{equs}
where we have used the identities \eqref{identaylor} and \eqref{phideri}.
\end{proof}

In order to get some $ \bar x $-invariant properties, we need to be more precise in the choice of the characters which is the aim of the next assumption.

\begin{assumption} \label{assumpt2}
We assume that the family of characters $ (\varphi_{\bar x})_{ \bar x \in \R^{d+1}} $ satisfies:
\begin{equ}[e:Pim]
  \left\{ \begin{aligned}
  &   \varphi_{\bar x,y} (X_i)   = y_i - \bar x_i,   \\
  &   \varphi_{\bar x,y}\big( \CI_{(\Labhom,k)} (\widehat\tau)\big) = 
  \int_{\R^{d+1}} D^{k} K_{\Labhom}(y-z) \varphi_{\bar x,z} (\widehat\tau) dz
  \end{aligned} \right.
\end{equ}
where $ (K_{\Labhom})_{\Labhom \in \Lab} $ is a family of smooth and compactly supported kernels.
\end{assumption}
Note that Assumption~\ref{assumpt2} implies  Assumption~\ref{assumpt1}. Indeed, the kernels $ K_{\mathfrak{t}} $ being smooth and compactly supported, one can exchange derivatives and integrals. A good example to have in mind is when the $ K_{\mathfrak{t}} $ are in $ \mathcal{C}^{\infty}(\R^{d+1},\R) $ and compactly supported.

\begin{theorem} \label{invariant bar x} 
Under the Assumption~\ref{assumpt2}, one gets
\begin{enumerate} 
\item The renormalised character map $ \varphi^{+}_{x, \bar x } $ is invariant in its second subscript: $\varphi^{+}_{x, 0}:= \varphi^{+}_{x, \bar x }$. 
\item  Bogoliubov's preparation map $ \bar \varphi_{x, \bar x }   $ is invariant in its second
subscript on planted trees and:
\begin{equs}
\bar \varphi_{x, \bar x, y }\big(\CI_{(\Labhom,k)} (\widehat\tau)\big) 
	= \big( D^{k} K_{\Labhom} * \varphi^{+}_{x, \bar{x}}(\widehat\tau) \big) (y).  
\end{equs}
\item The counter term map $ \varphi^{-}_{x, \bar x }   $ is invariant in its second
subscript on trees which have a zero node decoration at the root.
\end{enumerate}
\end{theorem}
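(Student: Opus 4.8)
The plan is to run a single induction on the number of edges of the tree, reducing from general trees to planted trees and to the generators $X_i$ via the tree product, and to establish the three items in a coordinated order. The engine is the convolution formula in item~(2), which I would prove \emph{first}, as an unconditional consequence of Assumption~\ref{assumpt2}: starting from \eqref{RCidentity}, apply the kernel representation $\varphi_{\bar x,y}(\CI_{(\Labhom,p)}(\,\cdot\,))=\int_{\R^{d+1}}D^{p}K_{\Labhom}(y-z)\,\varphi_{\bar x,z}(\,\cdot\,)\,dz$ to the first tensor leg, pull the integral out of the finite tensor contraction, and observe that what remains contracted against $\hat\Delta^{\!+}\widehat\tau$ is exactly $(\varphi_{\bar x,z}\otimes\varphi^{-}_{x,\bar x,\bar x})\hat\Delta^{\!+}=\varphi^{+}_{x,\bar x,z}$ by the third line of \eqref{Bogoliubovrecursion}. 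This yields
\begin{equs}
  \bar\varphi_{x,\bar x,y}\big(\CI_{(\Labhom,p)}(\widehat\tau)\big)
  = \big(D^{p}K_{\Labhom}*\varphi^{+}_{x,\bar x}(\widehat\tau)\big)(y),
\end{equs}
which is precisely the identity stated in item~(2); exchanging the integral with the finite sums is legitimate because the kernels are smooth, as already remarked after Assumption~\ref{assumpt2}.

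Next I would prove item~(1) by induction on the number of edges. Since $\varphi^{+}_{x,\bar x}$ is an algebra morphism by Theorem~\ref{main theorem} and $\varphi^{+}_{x,\bar x,y}(X^{n})=(y-x)^{n}$ is manifestly independent of $\bar x$ (computed in the text), it suffices to treat a single planted tree $\CI_{(\Labhom,k)}(\widehat\tau)$, the base case being the polynomials. By Proposition~\ref{explicit phi plus},
\begin{equs}
  \varphi^{+}_{x,\bar x,y}\big(\CI_{(\Labhom,k)}(\widehat\tau)\big)
  = \bar\varphi_{x,\bar x,y}\big(\CI_{(\Labhom,k)}(\widehat\tau)\big)
    - \mathrm{T}_{\alpha,x,y}\big(\bar\varphi_{x,\bar x,\cdot}(\CI_{(\Labhom,k)}(\widehat\tau))\big),
\end{equs}
with $\alpha=|\CI_{(\Labhom,k)}(\widehat\tau)|_{\s}$. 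The subtree $\widehat\tau$ has strictly fewer edges, so by the induction hypothesis $\varphi^{+}_{x,\bar x}(\widehat\tau)$ does not depend on $\bar x$; combined with the convolution formula just established, neither does $\bar\varphi_{x,\bar x,\cdot}(\CI_{(\Labhom,k)}(\widehat\tau))$, hence neither does its Taylor jet. This closes the induction and proves~(1), and feeding~(1) back into the convolution formula gives at once the $\bar x$-invariance of $\bar\varphi_{x,\bar x,\cdot}$ on planted trees claimed in~(2).

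For item~(3) I would again use multiplicativity (Theorem~\ref{main theorem}): a tree $\widehat\tau\in\CR\CT^{\CD}_{+}$ with vanishing node decoration at the root is a product $\prod_i\CI_{(\Labhom_i,k_i)}(\widehat\tau_i)$, so $\varphi^{-}_{x,\bar x,y}(\widehat\tau)=\prod_i\varphi^{-}_{x,\bar x,y}(\CI_{(\Labhom_i,k_i)}(\widehat\tau_i))$, and by Definition~\ref{Bogoliubov} each factor equals $-\mathrm{T}_{\alpha_i,x,y}\big(\bar\varphi_{x,\bar x,\cdot}(\CI_{(\Labhom_i,k_i)}(\widehat\tau_i))\big)$; by the convolution formula together with item~(1) the argument of each Taylor jet is independent of $\bar x$, hence so is $\varphi^{-}_{x,\bar x,y}(\widehat\tau)$. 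The hypothesis of zero root decoration is genuinely needed, since the text shows $\varphi^{-}_{x,\bar x,\bar x}(X^{k})=(\bar x-x)^{k}$, which does depend on $\bar x$.

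The main obstacle is organisational rather than conceptual: the three items must be proved in the right order --- the convolution identity of~(2) unconditionally and before~(1), then~(1) feeding back into the invariance half of~(2) and into~(3) --- and each introduction of a kernel integral must be accompanied by the (routine, by smoothness of the kernels) justification that it commutes with the finite tensor contractions produced by $\hat\Delta^{\!+}$ and with the Taylor-jet sums. The heavier analytic identities --- Lemma~\ref{lem:RBb}, in particular \eqref{identaylor}, and Assumption~\ref{assumpt1} in the form $\varphi_{\bar x,\cdot}(\CI_{(\Labhom,k+\ell)}(\widehat\tau))=D^{\ell}\varphi_{\bar x,\cdot}(\CI_{(\Labhom,k)}(\widehat\tau))$ --- are already packaged inside Theorem~\ref{main theorem} and Proposition~\ref{explicit phi plus}, which I would use as black boxes.
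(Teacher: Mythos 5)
Your proof follows essentially the same route as the paper's: both derive the convolution identity $\bar\varphi_{x,\bar x,\cdot}(\CI_{(\Labhom,k)}(\widehat\tau)) = \big(D^{k}K_{\Labhom} * \varphi^{+}_{x,\bar x}(\widehat\tau)\big)(\cdot)$ directly from Assumption~\ref{assumpt2} and the coaction formula for $\varphi^{+}$, then prove item~(1) by induction on planted trees via Proposition~\ref{explicit phi plus}, and reduce item~(3) to planted trees by multiplicativity. The argument is correct; you merely make the ordering of the three items and the (routine) interchange of the kernel integral with the finite tensor contractions more explicit than the paper does.
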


\begin{proof} We first prove the invariance of the renormalised character map $ \varphi^{+}_{x,\bar x} $ in its second subscript. By the morphism property, we need to check it on $ \one $,  $ X_i $ and $ \CI_{(\Labhom,k)} (\widehat\tau) $. One has:
\begin{equs}
\varphi^{+}_{x, \bar x ,y}(\one) = 1, \quad 	\varphi^{+}_{x, \bar x ,y}(X_i)  = y_i - x_i.
\end{equs}
Then for $ \CI_{(\Labhom,k)} (\widehat\tau) $, one gets
\begin{equs}
 	\bar \varphi_{x, \bar x ,\bigcdot}(\CI_{(\Labhom,k)} (\widehat\tau)) 
 	& =   \left( \varphi_{\bar x, \bigcdot} \CI_{(\Labhom,k)} \otimes \varphi^{-}_{ x, \bar x, \bar x } \right) \hat \Delta^{\! +} \widehat\tau \\  
	& = \left(  D^{k} K_{\Labhom}  * \varphi_{\bar x, \bigcdot }\otimes \varphi^{-}_{ x, \bar x, \bar x } \right) \hat \Delta^{\! +} \widehat\tau \\
 	& =  \big( D^{k} K_{\Labhom} * \varphi^{+}_{x, \bar x}(\widehat\tau) \big) (\bigcdot).
\end{equs} 
We conclude by applying an induction hypothesis on $ \varphi^{+}_{x, \bar x, \bigcdot}(\widehat\tau) $ and by using proposition~\ref{explicit phi plus}. We also proved the formula for $ \bar \varphi_{x, \bar x ,y} $ and its invariance in the second subscript on planted trees. For $\varphi^{-}_{x, \bar x }$, we use the character property and we consider only planted trees. By the definition of $ \varphi^{-}_{x, \bar x }$ which involves $\bar \varphi_{x, \bar x}   $, we conclude from the invariance of $\bar \varphi_{x, \bar x}   $ on its second subscript.
\end{proof}
 \begin{remark} Let us mention that the map $ \varphi^{-}_{x, \bar{x}} $ is not invariant in its second subscript on planted trees with a non zero node decoration at the root. Indeed, this produces a factor of the form $ \varphi^-_{x,\bar{x}}(X^k) $ which is not invariant.
\end{remark}

\section{Applications to singular SPDEs}
\label{sec::4}
 In this section, we present  algebraic Birkhoff type factorisations connected to Proposition~\ref{Birkhoff_comodule} and its extension with Theorem~\ref{main_propoerties}. In each case, we will specify the coaction and the target space $ A $ respectively $ A_- $ and $ A_+ $, which are different depending on the application. The first application is the recentering character for Regularity Structures also called Positive Renormalisation and first introduced in \cite{reg}. Its construction using a twisted antipode has been made precise in \cite{BHZ}. We revisit its construction in light of Theorem~\ref{main_propoerties}.
The second application is the negative renormalisation for SPDEs, which is reproducing the BPHZ algorithm by incorporating all the Taylor expansions at the level of the algebra. It was introduced in \cite{BHZ} and is also described by a twisted antipode. We reinterpret this construction with  Proposition~\ref{Birkhoff_comodule} and a Rota-Baxter map satisfying the identity \eqref{mult_Rota}.

\subsection{Positive Renomalisation} \label{sec::singular}

The formalism of decorated trees has been developed originally for singular Stochastic Partial Differential Equations (SPDEs). In this context, a decorated tree is a combinatorial representation of an iterated integral obtained from a perturbative expansion performed using the mild formulation of the equation.
Indeed, considering an SPDE of the form:
\begin{equs} \label{SPDEs}
	\partial_t u - \Delta u = F(u,\nabla u, \xi), \quad  (t,x) \in \R_+ \times \R^{d},
\end{equs}
where $ \xi $ is a space-time noise, the mild formulation of \eqref{SPDEs} is given by:
\begin{equs} \label{mild}
	u = K * \left(  F(u, \nabla u, \xi) \right),
\end{equs}
where $ K $ is the heat kernel and $ * $ denotes space-time convolution. We suppose for simplicity that $u(0,\bigcdot) = 0$. Then, if $ F $ is a polynomial, one iterates this equation on $ u  $ as well as $ \nabla u $. The noise $ \xi $ is replaced by a mollified noise $ \xi_{\eps} $. If $ F $ is not a polynomial, the iteration is performed after its Taylor expansion, which makes appear polynomials in the iterated integrals.

More generally, we suppose given a finite set $ \Lab = \Lab_+ \sqcup \Lab_- $, a collection of kernels
$ (K_{\Labhom})_{\Labhom \in \Lab_+} $ which corresponds to the kernels appearing in the mild formulation and a collection of smooth noises $ (\xi_{\Labhom})_{\Labhom \in \Lab_-} $. The scaling $ \s \in \N^{d+1} $ is fixed to be parabolic, i.e., $ \s = (2,1,\ldots,1) $. Furthermore, it is supposed that $ \eqref{SPDEs} $ is locally subcritical in the sense that one can associate a subcritical and normal complete rule $ R $ to $ \eqref{SPDEs} $. Then, one can construct a subspace $  \bar\CT^{\CD}  = \langle  B_{\circ}\rangle \subset \CT^{\CD} $, which is stable under the coproduct on decorated trees and generated by $ R $. The set $ B_{\circ} \subset  \CR \CT^{\CD}$ are the decorated that conform the rule $ R $. In other words, they are generated by iterating the mild formulation \eqref{mild}.  We shall refrain from giving further details and refer the reader to  \cite[Section 5]{BHZ}, where those rules have been detailed. The decorated trees in $\bar\CT^{\CD}$ associated to the equation allow to describe the iterated integrals coming from its perturbative expansion. 

The essential point is that $  \bar\CT^{\CD}$ is a subset of $\CT^{\CD} $ but not necessarily a subalgebra.  Indeed, the rules reflect the distributional products of the equation, which imply that trees cannot be multiplied using the tree product: yielding products that we will not be able to renormalise.

 However, the (co)algebraic results given in Section~\ref{sec::Birkhoff} remain valid when one replaces $ \CT^{\CD} $ by $ \bar\CT^{\CD}  $.  This is possible because $ \bar\CT^{\CD}  $ is compatible with the coproduct and one can use the product in $ \CT^{\CD} $ to decompose elements of $ \bar\CT^{\CD}  $ into smaller components. Note that decorated trees are still abbreviated by $\widehat\tau \in  \bar\CT^{\CD}  $.

Then, the iterated integrals are given by a family of characters, $ \PPi^{(\bar x)} : \bar\CT^{\CD}  \rightarrow \CH $, indexed by $ \bar x \in \R^{d+1} $ and recursively defined by
\begin{equ}[e:Pim]
  \left\{ \begin{aligned}
  & (\PPi^{(\bar x)} \one)(y) = 1 ,  
  \qquad
  (\PPi^{(\bar x)} X_i)(y)  = y_i - \bar x_i,\\ 
  & (\PPi^{(\bar x)} \CI_{(\Labhom,k)} (\widehat\tau))(y) = 
  \int_{\R^{d+1}} D^{k} K_{\Labhom}(y-z) (\PPi^{(\bar x)} \widehat\tau)(z) dz, \quad \Labhom \in \Lab_+ \\
  & (\PPi^{(\bar x)} \CI_{(\Labhom,k)} (\one))(y) = 
 D^{k} \xi_{\Labhom}(y), \quad \Labhom \in \Lab_-.
  \end{aligned} \right.
\end{equ}
We add as an extra assumption that edges of type $ \Labhom \in \Lab_- $ can only appear as terminal edges. We also assume that branches of the form $\CI_{(\Labhom,k)}(X^\ell)$
with $ \ell \neq 0 $ and $ \Labhom \in \Lab_- $  cannot appear. The character 
\begin{equs} \label{PPi}
	\PPi := \PPi^{(0)} 
\end{equs} 
has been introduced in \cite{reg} where it was identified as the main character for the construction of the model in \cite{BHZ}. However, we would like to consider instead $ \PPi^{(\bar x)} $, where a priori recentering of the polynomials around $ \bar x \in \R^{d+1} $ is allowed, and see how the main objects used in the theory of SPDEs behave toward this change.

As presented in the previous section, we are given a scaling $ \s $ together with a map $ |\bigcdot|_{\s} : \Lab \rightarrow \R $. The latter depends on the analytical assumptions put on the kernels and noises. It is expected that $|\bigcdot|_{\s} : \Lab_+ \rightarrow \R_+  $ corresponds to Schauder estimates for improvement of regularity and that  $|\bigcdot|_{\s} : \Lab_- \rightarrow \R_-  $ encodes the singularity of the noises when the mollification is removed.
We consider the space $ \bar\CT^{\CD}_+ $ as not being a subspace of $ \bar\CT^{\CD} $, but as a subspace of  
\begin{equs} \label{TplusSPDE}
	 \bar\CT^{\CD}_+ \subset \hat \CT^{\CD}_{+} 
	 =   \Big\{ X^{k} \prod_{i=1}^{n} 
	 \hat \CJ_{(\Labhom_i,p_i)}( \widehat\tau_i),   \widehat\tau_i \in \bar\CT^{\CD}  \Big\}.
\end{equs}
Here the symbol $ \hat \CJ_{(\Labhom,p)}  $ instead of $  \CI_{(\Labhom,p)} $ is used for the edges outgoing of the root. We denote by $  \mathfrak{i}_+$  the injection from $\hat \CT^{\CD}_+$ to $ \CT^{\CD}$. Elements of   $ \bar \CT^{\CD}_+ $ are defined by applying the projection map $ \pi_+ $ which is now considered from $ \hat \CT^{\CD}_+ $ to $ \bar \CT^{\CD}_+ $. The symbol $ \CJ_{(\Labhom,p)}  $ will be a short hand notation for $ \pi_+ \circ \hat \CJ_{(\Labhom,p)}  $. The map $ \PPi^{(\bar x)}  $ can also be defined on $ \hat \CT^{\CD}_+ $ by setting:
\begin{equs}
	\PPi^{(\bar x)} \hat \CJ_{(\Labhom,p)}(\widehat\tau) := \PPi^{(\bar x)} \CI_{(\Labhom,p)}(\widehat\tau). 
\end{equs}

The main reason for marking this difference is that $\bar\CT^{\CD}$, in general, is not an algebra. Indeed, the construction of $\bar\CT^{\CD} $ is constrained by the distributional product appearing on the right-hand side of \eqref{SPDEs}. On the other hand, $ \bar\CT^{\CD}_+ $ is always an algebra. 

One of the main achievements of Hairer's theory of regularity structures is to provide a Taylor expansion of the solution $ u $ of \eqref{SPDEs}:
\begin{equs}
	u(y) = u(x) + \sum_{\widehat\tau \in \bar\CT^{\CD} _{\tiny{\text{eq}}}} \Upsilon[\widehat\tau](x) (\Pi_x \widehat\tau)(y) + R(x,y),
\end{equs}
where $ \bar\CT^{\CD} _{\tiny{\text{eq}}} \subset \bar\CT^{\CD} $ contains trees generated by the perturbative expansion and $ \Upsilon[\widehat\tau](x) $ are coefficients which may depend on $ u $ and its derivatives $ \nabla u $. The map $ \Pi_x $ is deduced from $ \PPi^{(0)} = \PPi $ in Definition~\ref{def:modelmap} below.  Then, one can define a Regularity Structure $ (\bar\CT^{\CD} , G) $  where
\begin{itemize}
	\item $ \bar\CT^{\CD}  = \bigoplus_{\alpha \in A} \bar\CT^{\CD}_{\alpha} $ is a graded space with $ A \subset \R  $ bounded from below and locally finite.  For any element $ \widehat\tau \in \bar\CT^{\CD}_{\alpha} $, one has $ |\widehat\tau|_{\s}  = \alpha $. When $ \widehat\tau \in \bar\CT^{\CD} $,  $ \Vert  \widehat\tau \Vert_{\alpha} $ denotes the norm of its component in $\bar\CT^{\CD}_{\alpha}$.
	\item $ G $ is a structure group of continuous
linear operators acting on  $\bar\CT^{\CD}$ such that, for every $ \Gamma \in G $, every $ \alpha \in A $ and $ \widehat\tau \in  \bar\CT^{\CD}_{\alpha} $, one has
\begin{equs}
	\Gamma \widehat\tau - \widehat\tau \in \bigoplus_{\beta < \alpha}  \bar\CT^{\CD} _{\beta} .
\end{equs}
Elements in $ G $ are maps of the form $ \Gamma_{g} $ where $ g :  \bar\CT^{\CD}_+  \rightarrow \R $ is a character and
\begin{equs}
	\Gamma_g  = \left( \id \otimes g \right) \hat{ \Delta}^{\!+}.
\end{equs}
\end{itemize}
One of the main definitions in \cite{reg,BHZ} makes precise the structures behind the construction of the map $ \Pi_x $.

\begin{definition}\label{def:modelmap}
Given the linear map $\PPi\colon  \bar\CT^{\CD}  \to \CH$, we define for all $z,\bar z\in\R^{d+1}$ 
\begin{itemize}
\item a linear map $\Pi_z\colon  \bar\CT^{\CD}  \to \CH$ and a character
$f_z :  \bar\CT^{\CD}_+ \rightarrow \CH$ by
\begin{equ}[e:defModel1]
	\Pi_z = \bigl(\PPi \otimes f_z\bigr) \hat \Delta^{\! +}  \;,
	\qquad 
	f_z = \left(\PPi \tilde\CA_+ \bigcdot \right)(z)\;,
\end{equ}
where $\tilde \CA_+$ is the positive twisted antipode given in \eqref{e:pseudoant} below.

\item a linear map $\Gamma_{\!z\bar z}\colon \bar\CT^{\CD} \to \bar\CT^{\CD}$ and a character $\gamma_{\!z\bar z} : \bar\CT^{\CD}_+  \rightarrow \CH$
\begin{equ}[e:defModel2]
	\Gamma_{\!z\bar z} = \left( \id \otimes \gamma_{\!z\bar z} \right) \hat \Delta^{\! +} , 
	\quad
 	\gamma_{\!z\bar z} =  \bigl( f_z\CA_{+} \otimes f_{\bar z}\bigr)\bar \Delta^{\! +}  \;.
\end{equ}
\end{itemize} 
\end{definition}

Then under certain favourable conditions on the map $ \PPi $ which correspond to the definition given in \eqref{e:Pim}, $ (\Pi, \Gamma) $ is an admissible model satisfying the following:
\begin{itemize}
\item Algebraic properties:
\begin{equs}  \label{alge_identity}
	\Gamma_{xx} 
	= \id, \quad \Gamma_{xy} \circ \Gamma_{yz} 
	= \Gamma_{xz}, \quad \Pi_y = \Pi_x \circ \Gamma_{xy}
\end{equs}
\item Analytical bounds: For every compact set $\K \subset \R^{d+1}$, we assume the existence of a constant $C_{\ell,\K}$ such that the bounds
\begin{equ}[e:boundPi]
	|(\Pi_x \widehat\tau)(y)| \le C_{\ell,\K} \|\widehat\tau\|_\ell \, \|x-y\|_\s^\ell, 
	\qquad
	\|\Gamma_{xy} \widehat\tau\|_m \le C_{\ell,\K} \|\widehat\tau\|_\ell \, \|x-y\|_\s^{\ell-m}\;, 
\end{equ}
hold uniformly over all $x,y \in \K$, all $m\in A$ with $m < \ell$ and all $\widehat\tau \in \bar\CT^{\CD}$ such that $ |\widehat\tau|_{\s} = \ell $.
\end{itemize}

 From \cite[Prop. 6.3]{BHZ} it follows that there exists a unique algebra morphism $\tilde\CA_+ \colon \bar\CT^{\CD}_+ \to \hat \CT^{\CD}_+$, called the ``positive twisted antipode'', such that $\tilde\CA_+ X_i = - X_i$ and furthermore for all $\CJ_{(\Labhom,k)}(\widehat\tau)\in \bar \CT^{\CD}_+$ 
\begin{equ}[e:pseudoant]
	\tilde\CA_+ \CJ_{(\Labhom,k)}(\widehat\tau) 
	= -\sum_{|\ell|_{\s} < | \CI_{(\Labhom,k)}(\widehat\tau) |_{\s} }
	{(-X)^\ell \over \ell!}  \hat \CM_+ \bigl( \hat \CJ_{(\Labhom,k+\ell)} 
	\otimes \tilde\CA_+\bigr)  \hat \Delta^{\! +} \widehat\tau\;,
\end{equ}
where $\hat \CM_+ : \hat \CT^{\CD}_+ \otimes \hat \CT^{\CD}_+ \rightarrow \hat \CT^{\CD}_+  $ is the tree product on $ \hat \CT^{\CD}_+ $. Then we consider the character $ \varphi $ defined for every $ \bar x \in \R^{d+1} $ by:
\begin{equs} 
\label{def_character}
	\varphi_{\bar x}(\widehat\tau) = (\PPi^{(\bar x)} \widehat\tau).
\end{equs}

The main characters used for defining the model associated to a SPDE are:
\begin{equs}
	f^{(\bar x)}_x = (\PPi^{(\bar x)} \tilde \CA_+ \bigcdot)(x), 
	\quad 
	\Pi_{x}^{(\bar x)} = \left( \PPi^{(\bar x)} \otimes f_x^{(\bar x)} \right) 
\hat \Delta^{\! +}
\end{equs}
 Then, one can define the re-expanding map $ \Gamma^{(\bar x)} $ as
\begin{equs}
\Gamma^{(\bar x)}_{xy} = \left( \id \otimes \gamma^{(\bar x)}_{xy} \right) \hat \Delta^{\! +}, \quad \gamma^{(\bar x)}_{xy} = \left( f^{(\bar x)}_x (\CA_+ \bigcdot)\otimes f^{(\bar x)}_y \right) \Deltap. 
\end{equs}

\begin{proposition} \label{BModel}
Under the assumption~\eqref{def_character}, one gets:
\begin{equs}
	\Pi_x^{(\bar x)}  = \varphi^{+}_{x, \bar x}, 
	\quad 
	f_{x}^{(\bar x)} = \varphi^{-}_{x, \bar x, \bar x} .
\end{equs}
\end{proposition}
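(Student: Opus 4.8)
The claim is that the two characters built from $\PPi^{(\bar x)}$ via Hairer's model construction coincide with the ones produced by the Bogoliubov-type recursion of Definition~\ref{Bogoliubov}. The plan is to reduce everything to identifications already established in the excerpt. First I would observe that the character $\varphi_{\bar x}(\widehat\tau) = (\PPi^{(\bar x)}\widehat\tau)$ of \eqref{def_character} satisfies Assumption~\ref{assumpt2}: the first identity $\varphi_{\bar x,y}(X_i) = y_i - \bar x_i$ is immediate from the definition of $\PPi^{(\bar x)}$ on $X_i$, and the second, $\varphi_{\bar x,y}(\CI_{(\Labhom,k)}(\widehat\tau)) = \int D^k K_{\Labhom}(y-z)\varphi_{\bar x,z}(\widehat\tau)\,dz$, is precisely the recursive definition of $\PPi^{(\bar x)}$ on planted trees with $\Labhom \in \Lab_+$ (for $\Labhom \in \Lab_-$ one uses the convention $\PPi^{(\bar x)}\hat\CJ_{(\Labhom,p)}(\widehat\tau) = \PPi^{(\bar x)}\CI_{(\Labhom,p)}(\widehat\tau)$ and the terminal-edge assumption, so the only relevant derivative identity is exchanging $D^\ell$ with $\xi_{\Labhom}$, which holds since the noises are smooth). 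Hence Assumption~\ref{assumpt2}, and therefore Assumption~\ref{assumpt1}, is in force, and all of Theorem~\ref{main theorem}, Proposition~\ref{twisted_antipode_f}, Proposition~\ref{explicit phi plus} and Theorem~\ref{invariant bar x} apply to this $\varphi_{\bar x}$.

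Next I would handle $f_x^{(\bar x)} = \varphi^{-}_{x,\bar x,\bar x}$. By definition $f_x^{(\bar x)}(\widehat\tau) = (\PPi^{(\bar x)}\tilde\CA_+\widehat\tau)(x)$, which in the notation of \eqref{def_character} is exactly $\varphi_{\bar x,x}(\tilde\CA_+\widehat\tau)$. Proposition~\ref{twisted_antipode_f}, applied with the character $\varphi_{\bar x,x}\colon H^{\CD}\to\R$ (evaluation at the point $x$), gives precisely $\varphi_{\bar x,x}\tilde\CA_+ = \varphi^{-}_{x,\bar x,\bar x}$ as characters from $H^{\CD}_+$ to $\R$. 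One should check that the twisted antipode $\tilde\CA_+$ of \eqref{e:pseudoant} (living on $\bar\CT^{\CD}_+$, valued in $\hat\CT^{\CD}_+$, with the $\hat\CJ/\CJ$ decorations) matches the one in \eqref{twistedantipode} once composed with $\PPi^{(\bar x)}$: this is immediate since $\PPi^{(\bar x)}$ does not distinguish $\hat\CJ_{(\Labhom,p)}$ from $\CI_{(\Labhom,p)}$, and the projection $\pi_+$ is invisible after pairing with a character because $\PPi^{(\bar x)}$ annihilates nothing relevant here — more precisely, the degree-cutoff $|\ell|_{\s} < |\CI_{(\Labhom,k)}(\widehat\tau)|_{\s}$ is the same in both formulas. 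Thus $f_x^{(\bar x)} = \varphi^{-}_{x,\bar x,\bar x}$.

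Finally, for $\Pi_x^{(\bar x)} = \varphi^{+}_{x,\bar x}$, I would compare the two defining formulas directly: by \eqref{e:defModel1} (in its $\bar x$-indexed form) $\Pi_x^{(\bar x)} = (\PPi^{(\bar x)}\otimes f_x^{(\bar x)})\hat\Delta^{\!+} = (\varphi_{\bar x,\cdot}\otimes \varphi^{-}_{x,\bar x,\bar x})\hat\Delta^{\!+}$, using the first identification of this paragraph for $\PPi^{(\bar x)}$ and the second paragraph for $f_x^{(\bar x)}$. On the other hand, the third line of Definition~\ref{Bogoliubov} states $\varphi^{+}_{x,\bar x,y} = (\varphi_{\bar x,y}\otimes\varphi^{-}_{x,\bar x,\bar x})\hat\Delta^{\!+}$. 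These are the same map, so $\Pi_x^{(\bar x)} = \varphi^{+}_{x,\bar x}$. The two statements of the proposition are therefore both reduced to the matching of definitions once Assumption~\ref{assumpt2} is verified and Proposition~\ref{twisted_antipode_f} is invoked.

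The main obstacle, such as it is, is bookkeeping rather than genuine difficulty: one must be careful that the ambient space $\bar\CT^{\CD}$ is not an algebra, so the multiplicativity arguments of Theorem~\ref{main theorem} and Proposition~\ref{twisted_antipode_f} are used in the form "valid on $\CT^{\CD}$, hence on $\bar\CT^{\CD}$" as flagged in Section~\ref{sec::singular}, and that the decoration swap $\hat\CJ \leftrightarrow \CI$ together with the projector $\pi_+$ genuinely disappears after evaluation against $\PPi^{(\bar x)}$. No new estimate or construction is needed; the content is entirely in recognising that $\varphi_{\bar x} := \PPi^{(\bar x)}$ is an instance of the abstract setup of Section~\ref{sec::Birkhoff} and that Hairer's $\Pi_x$, $f_z$ are literally the abstract $\varphi^{+}$, $\varphi^{-}$.
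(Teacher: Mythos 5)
Your proof is correct and follows essentially the same route as the paper: identify $\varphi_{\bar x}=\PPi^{(\bar x)}$ as an instance of the abstract setup, invoke Proposition~\ref{twisted_antipode_f} to get $f_x^{(\bar x)}=\varphi^{-}_{x,\bar x,\bar x}$, and then match the defining formula of $\Pi_x^{(\bar x)}$ with the third line of Definition~\ref{Bogoliubov}. The only difference is that you spell out the verification of Assumption~\ref{assumpt2} and the $\hat\CJ$ versus $\CI$ bookkeeping, which the paper leaves implicit.
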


\begin{proof} 
The fact that $ f_{x}^{(\bar x)} = \varphi^{-}_{x, \bar x, \bar x}  $ comes from Proposition~\ref{twisted_antipode_f}. Then, we conclude using Definition~\ref{Bogoliubov}.
\end{proof}

\begin{corollary} 
The model $ (\Pi,\Gamma) $ is invariant under translations of the monomials, in the sense that for every $ \bar x \in \R^{d+1} $:
\begin{equs}
	\Pi_x  = \Pi_x^{(\bar x)}, 
	\quad
	\Gamma_{xy}  = \Gamma_{xy}^{(\bar x)}.
\end{equs}
\end{corollary}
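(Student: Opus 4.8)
\emph{Strategy.} Both identities will be reduced to facts already established. By Proposition~\ref{BModel}, for every $\bar x$ one has $\Pi_x^{(\bar x)} = \varphi^{+}_{x,\bar x}$ and $f_x^{(\bar x)} = \varphi^{-}_{x,\bar x,\bar x}$; and since $\PPi = \PPi^{(0)}$, Definition~\ref{def:modelmap} exhibits $(\Pi_x, f_x, \gamma_{xy}, \Gamma_{xy})$ as the case $\bar x = 0$ of $(\Pi_x^{(\bar x)}, f_x^{(\bar x)}, \gamma^{(\bar x)}_{xy}, \Gamma^{(\bar x)}_{xy})$, so in particular $\Pi_x = \varphi^{+}_{x,0}$ and $\Gamma_{xy} = \Gamma^{(0)}_{xy}$. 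Thus it suffices to show that $\varphi^{+}_{x,\bar x}$ and $\gamma^{(\bar x)}_{xy}$ are independent of $\bar x$. The first is exactly Theorem~\ref{invariant bar x}(1); the second requires one extra bookkeeping step, because $f_x^{(\bar x)}$ itself is \emph{not} $\bar x$-invariant (it equals $(\bar x - x)^{k}$ on $X^{k}$, as computed before Theorem~\ref{main theorem}).

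\emph{The identity for $\Pi$.} I would simply chain the above with Theorem~\ref{invariant bar x}(1):
\[
	\Pi_x = \varphi^{+}_{x,0} = \varphi^{+}_{x,\bar x} = \Pi_x^{(\bar x)}.
\]

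\emph{The identity for $\Gamma$.} Since $\Gamma^{(\bar x)}_{xy} = (\id \otimes \gamma^{(\bar x)}_{xy})\hat \Delta^{\! +}$ and $\hat \Delta^{\! +}$ takes values in $\bar\CT^{\CD} \otimes \bar\CT^{\CD}_+$, it is enough to prove that the character $\gamma^{(\bar x)}_{xy} : \bar\CT^{\CD}_+ \to \CH$ coincides with $\gamma^{(0)}_{xy}$. Being multiplicative, it is determined by its values on the generators $X_i$ and on planted trees $\CJ_{(\Labhom,k)}(\widehat\tau)$. On $X_i$, using $\CA_+ X_i = -X_i$ and $f_z^{(\bar x)}(X_i) = \bar x_i - z_i$, one gets $\gamma^{(\bar x)}_{xy}(X_i) = -(\bar x_i - x_i) + (\bar x_i - y_i) = x_i - y_i$: the $\bar x$-shift telescopes away, exactly as in the computation $\varphi^{+}_{x,\bar x,y}(X^{k}) = (y-x)^{k}$ preceding Theorem~\ref{main theorem}. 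On a planted tree I would expand $\bar\Delta^{\! +}\CJ_{(\Labhom,k)}(\widehat\tau)$ through the recursion behind \eqref{coproduct}: the term $\one \otimes \CJ_{(\Labhom,k)}(\widehat\tau)$ gives $f_y^{(\bar x)}(\CJ_{(\Labhom,k)}(\widehat\tau))$, which is $\bar x$-free by Theorem~\ref{invariant bar x}(3) (zero node decoration at the root); the internal terms built from $(\CJ_{(\Labhom,k)} \otimes \id)\hat \Delta^{\! +}\widehat\tau$ are handled by induction on the number of edges; and the polynomial terms $\sum_{\ell}\tfrac{X^{\ell}}{\ell!}\otimes \CJ_{(\Labhom,k+\ell)}(\widehat\tau)$, once paired with $f_x^{(\bar x)}\circ \CA_+$, are rewritten using $f_x^{(\bar x)}(X^{\ell}\,\cdot) = (\bar x - x)^{\ell} f_x^{(0)}(\cdot)$ on products of planted trees (Theorem~\ref{invariant bar x}(3) again) together with the Taylor--jet identity of Lemma~\ref{lem:RBb}, and then collapse, exactly as in the proof of Proposition~\ref{explicit phi plus}, to an expression free of $\bar x$. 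This gives $\gamma^{(\bar x)}_{xy} = \gamma^{(0)}_{xy} = \gamma_{xy}$, hence $\Gamma^{(\bar x)}_{xy} = \Gamma_{xy}$.

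\emph{Main obstacle.} The one non-routine ingredient is this final telescoping for planted trees. The positive antipode $\CA_+$ reintroduces polynomial decorations at the root (see \eqref{plusantipode}), so $f_x^{(\bar x)}\circ \CA_+$ is genuinely $\bar x$-dependent on $\bar\CT^{\CD}_+$, and one must check that these contributions cancel against the polynomial branch of the coproduct — this is the same Taylor-jet cancellation that powers Theorem~\ref{invariant bar x}. Once it is isolated, what remains is a finite induction on the number of edges. (One could also argue via the model identity $\Pi_y^{(\bar x)} = \Pi_x^{(\bar x)}\Gamma^{(\bar x)}_{xy}$ combined with $\Pi^{(\bar x)} = \Pi$, but concluding $\Gamma^{(\bar x)}_{xy} = \Gamma_{xy}$ from $\Pi_x\,\Gamma^{(\bar x)}_{xy} = \Pi_x\,\Gamma_{xy}$ would need injectivity of $\Pi_x$, which is not available at this level of generality, so the direct route above is the safe one.)
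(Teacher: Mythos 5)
Your argument for $\Pi_x = \Pi_x^{(\bar x)}$ is exactly the paper's: combine Proposition~\ref{BModel} with Theorem~\ref{invariant bar x}(1). For the $\Gamma$ identity, however, you take a genuinely different route. The paper does not touch $\gamma^{(\bar x)}_{xy}$ at all: it invokes the recursive formula of \cite[Prop.~3.13]{Br17}, which expresses $\Gamma_{xy}\CI_{(\Labhom,k)}(\widehat\tau)$ solely in terms of $\Gamma_{xy}\widehat\tau$ and evaluations of $\Pi_x$ on planted trees, so once $\Pi_x^{(\bar x)}=\Pi_x$ is known the $\bar x$-invariance of $\Gamma_{xy}$ follows by a one-line induction on the number of edges, with no cancellation to check. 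You instead work directly with the character $\gamma^{(\bar x)}_{xy}=(f_x^{(\bar x)}\CA_+\otimes f_y^{(\bar x)})\bar\Delta^{\!+}$ on the generators of $\bar\CT^{\CD}_+$; your computation on $X_i$ is correct, and your identification of the real issue — that $\CA_+$ reintroduces root polynomial decorations on which $f_x^{(\bar x)}$ is \emph{not} $\bar x$-invariant, so one must verify a Taylor-jet telescoping against the polynomial branch of the coproduct via Lemma~\ref{lem:RBb} — is accurate and is precisely the step your route has to pay for that the paper's route avoids. That cancellation is only sketched in your write-up, so it would need to be carried out (in the spirit of Proposition~\ref{explicit phi plus}) before the proof is complete; on the other hand your argument is self-contained, whereas the paper's leans on the external recursion from \cite{Br17}. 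Your closing remark that one cannot simply cancel $\Pi_x$ from $\Pi_x\Gamma^{(\bar x)}_{xy}=\Pi_x\Gamma_{xy}$ without injectivity is a correct and worthwhile caveat.
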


\begin{proof} For $ \Pi_x^{(\bar x)} $, the invariance follows from Theorems~\ref{invariant bar x} and \ref{BModel}. Then for $ \Gamma_{xy} $, we proceed by induction through the formula introduced in \cite[Prop. 3.13]{Br17}:
\begin{equs}
	\Gamma_{xy} \CI_{(\Labhom,k)}(\widehat\tau) 
		= \CI_{(\Labhom,k)}(\Gamma_{xy} \widehat \tau) 
			+ \sum_{|\ell|_{\s} \leq | \CI_{(\Labhom,k)}(\widehat\tau) |_{\s}}
			\frac{(X + (y-x) \one)^{\ell}}{\ell !}
			(\Pi_x \CI_{(\Labhom,k+ \ell)}( \Gamma_{xy} \widehat\tau)) (y).
\end{equs}
\end{proof}

We are also able to recover the recursive formulation proposed as a definition in \cite{reg} for the maps $ \Pi_x $ and  $ f_{x}^{(\bar x)} $. 

\begin{proposition}\label{recursive formulation}
The map $ \Pi_x $ is given for $ \CI_{(\Labhom,k)} (\widehat\tau)  $ with $|\CI_{(\Labhom,k)} (\widehat \tau)|_{\s} = \alpha $ by
\begin{equs} 
\label{formula Pi} 
\begin{aligned}
	\left( \Pi_x \CI_{(\Labhom,k)} (\widehat\tau) \right)(y) 
	& = (D^{k} K_{\Labhom} * \Pi_{x}\widehat\tau)(y) \\ 
	&  - \sum_{|\ell|_{\s} \leq \alpha} \frac{(y-x)^{\ell}}{\ell !} (D^{k+\ell} K_{\Labhom} * \Pi_{x} \widehat\tau)(x)
\end{aligned}
\end{equs}
and the map $ f_x^{(\bar x)} $ is given for $ \CJ_{(\Labhom,k)} (\widehat\tau)  $ with $ \alpha = | \CI_{(\Labhom,k)} (\widehat\tau)|_{\s} > 0 $ by 
\begin{equs} 
\label{formula fx}
	f_x^{(\bar x)} \left(  \CJ_{(\Labhom,k)} (\widehat\tau) \right) 
	&  = - \sum_{| \ell |_{\s} \leq \alpha} \frac{(\bar x  - x)^{\ell}}{\ell !} (D^{k+\ell} K_{\Labhom} * \Pi_{x} \widehat\tau)(x).
\end{equs} 
\end{proposition}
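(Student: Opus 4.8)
The plan is to read both formulas off the Bogoliubov-type recursion of Definition~\ref{Bogoliubov} through the dictionary already in place. By Proposition~\ref{BModel}, together with the $\bar x$-invariance recorded in the corollary immediately following it, one has $\Pi_x = \varphi^{+}_{x,\bar x}$ for every $\bar x \in \R^{d+1}$ and $f_x^{(\bar x)} = \varphi^{-}_{x,\bar x,\bar x}$; in particular $\varphi^{+}_{x,\bar x}(\widehat\tau) = \Pi_x\widehat\tau$ on each subtree $\widehat\tau \in \bar\CT^{\CD}$, so that all convolutions $D^k K_{\Labhom} * \Pi_x\widehat\tau$ written below are well defined.

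For \eqref{formula Pi}, I would first apply Proposition~\ref{explicit phi plus} to the planted tree $\CI_{(\Labhom,k)}(\widehat\tau)$, with $\alpha = |\CI_{(\Labhom,k)}(\widehat\tau)|_{\s}$, to obtain
\begin{equs}
	(\Pi_x\CI_{(\Labhom,k)}(\widehat\tau))(y)
	= \bar\varphi_{x,\bar x,y}(\CI_{(\Labhom,k)}(\widehat\tau))
	- \mathrm{T}_{\alpha,x,y}\bigl(\bar\varphi_{x,\bar x,\cdot}(\CI_{(\Labhom,k)}(\widehat\tau))\bigr).
\end{equs}
Part~(2) of Theorem~\ref{invariant bar x} identifies the first term on the right with $(D^k K_{\Labhom} * \Pi_x\widehat\tau)(y)$. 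For the second term I would expand the Taylor jet via its definition~\eqref{op}, replace $D^{\ell}\bar\varphi_{x,\bar x,\cdot}(\CI_{(\Labhom,k)}(\widehat\tau))$ by $\bar\varphi_{x,\bar x,\cdot}(\CI_{(\Labhom,k+\ell)}(\widehat\tau))$ using~\eqref{phideri}, and apply Theorem~\ref{invariant bar x}~(2) once more to each planted tree $\CI_{(\Labhom,k+\ell)}(\widehat\tau)$; this turns the jet into $\sum_{|\ell|_{\s} < \alpha} \frac{(y-x)^{\ell}}{\ell!}(D^{k+\ell}K_{\Labhom} * \Pi_x\widehat\tau)(x)$, which is precisely~\eqref{formula Pi}. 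The membership $\Pi_x\CI_{(\Labhom,k)}(\widehat\tau) \in \CH_x^{+}$ is the last clause of Proposition~\ref{explicit phi plus}.

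For \eqref{formula fx}, I would instead use the middle line of the recursion~\eqref{Bogoliubovrecursion} at $y = \bar x$ on $\CJ_{(\Labhom,k)}(\widehat\tau)$, identified with $\CI_{(\Labhom,k)}(\widehat\tau)$ through the injection $\mathfrak{i}_+$ (which preserves the degree), so that
\begin{equs}
	f_x^{(\bar x)}\bigl(\CJ_{(\Labhom,k)}(\widehat\tau)\bigr)
	= \varphi^{-}_{x,\bar x,\bar x}\bigl(\CJ_{(\Labhom,k)}(\widehat\tau)\bigr)
	= -\,\mathrm{T}_{\alpha,x,\bar x}\bigl(\bar\varphi_{x,\bar x,\cdot}(\CI_{(\Labhom,k)}(\widehat\tau))\bigr),
\end{equs}
with $\alpha = |\CI_{(\Labhom,k)}(\widehat\tau)|_{\s} > 0$. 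Expanding this jet in exactly the same way as above (definition~\eqref{op}, then~\eqref{phideri}, then Theorem~\ref{invariant bar x}~(2)) yields $-\sum_{|\ell|_{\s} < \alpha} \frac{(\bar x - x)^{\ell}}{\ell!}(D^{k+\ell}K_{\Labhom} * \Pi_x\widehat\tau)(x)$, which is~\eqref{formula fx}.

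The only point needing a little care is the summation range: the Taylor jet $\mathrm{T}_{\alpha,x,y}$ sums over $|\ell|_{\s} < \alpha$, whereas \eqref{formula Pi}--\eqref{formula fx} are written with $|\ell|_{\s} \le \alpha$. Under the subcriticality hypotheses in force the degree $\alpha$ of a planted tree $\CI_{(\Labhom,k)}(\widehat\tau)$ is never of the form $|\ell|_{\s}$ for a multi-index $\ell \in \N^{d+1}$, so the two ranges define the same sum (this is also why~\eqref{twistedantipode} and~\eqref{e:pseudoant} may be written with either inequality). I expect this bookkeeping, together with keeping the identifications between $\CI_{(\Labhom,k)}$, $\CJ_{(\Labhom,k)} = \pi_+\circ\hat\CJ_{(\Labhom,k)}$ and the injection $\mathfrak{i}_+$ straight, to be the only mildly delicate aspect; the substance is a direct assembly of Proposition~\ref{explicit phi plus}, Proposition~\ref{BModel} and Theorem~\ref{invariant bar x}.
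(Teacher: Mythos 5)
Your proposal is correct and follows essentially the same route as the paper's own proof: identify $\Pi_x\CI_{(\Labhom,k)}(\widehat\tau)$ with $(\id-\mathrm{T}_{\alpha,x,\cdot})$ applied to the preparation map via Proposition~\ref{explicit phi plus}, use Theorem~\ref{invariant bar x}~(2) to write $\bar\varphi_{x,\bar x}(\CI_{(\Labhom,k)}(\widehat\tau)) = D^kK_{\Labhom}*\Pi_x\widehat\tau$, and expand the Taylor jet (and likewise read $f_x^{(\bar x)}$ off the middle line of \eqref{Bogoliubovrecursion} at $y=\bar x$). Your explicit handling of the $|\ell|_{\s}<\alpha$ versus $|\ell|_{\s}\le\alpha$ summation range is in fact more careful than the paper, which passes over this point silently.
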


\begin{proof}
Indeed, from Theorem~\ref{invariant bar x}, one has an explicit formula for the preparation map $ \bar \varphi_{x, \bar x } $ given by
\begin{equs}
	\bar \varphi_{x, \bar x }(\CI_{(\Labhom,k)} (\widehat\tau)) 
	=  D^{k} K_{\Labhom} * \varphi^{+}_{x}(\widehat\tau)=  D^{k} K_{\Labhom} * \Pi_{x}(\widehat\tau).  
\end{equs}
Therefore, for $ \CI_{(\Labhom,k)} (\widehat\tau)  $ with $|\CI_{(\Labhom,k)} (\widehat\tau)|_{\s} = \alpha $
\begin{equs}
\left( \Pi_x \CI_{(\Labhom,k)} (\widehat \tau) \right)(y) & = \left( \id - \mathrm{T}_{\alpha,x,\bigcdot } \right) \left(  \bar \varphi_{x,\bar x} ( \CI_{(\Labhom,k)} (\widehat\tau) ) \right)(y) 
\\ & = \left( \id - \mathrm{T}_{\alpha,x,\bigcdot } \right) \left( D^{k} K_{\Labhom} * \Pi_{x}(\widehat\tau) \right) (y)
\\ & = (D^{k} K_{\Labhom} * \Pi_{x}\widehat\tau)(y) - \sum_{|\ell|_{\s} \leq \alpha} \frac{(y-x)^{\ell}}{\ell !} (D^{k+\ell} K_{\Labhom} * \Pi_{x} \widehat\tau)(x).
\end{equs}
Then, for $  \CJ_{(\Labhom,k)} (\widehat\tau)  $ with $ \alpha = | \CI_{(\Labhom,k)} (\widehat\tau)|_{\s} > 0 $, we get:
\begin{equs}
	f_x^{(\bar x)} \left(  \CJ_{(\Labhom,k)} (\widehat \tau) \right) 
	& =  - \mathrm{T}_{\alpha,x,\bar x} \left(  \bar \varphi_{x,\bar x} (  \mathfrak{i}_+(\CJ_{(\Labhom,k)} (\widehat\tau)) ) \right) \\
	& = - \mathrm{T}_{\alpha,x,\bar x} \left(  \bar \varphi_{x,\bar x} ( \CI_{(\Labhom,k)} (\widehat\tau) ) \right)\\
	& = \sum_{| \ell |_{\s} \leq \alpha} \frac{(\bar x  - x)^{\ell}}{\ell !} (D^{k+\ell} K_{\Labhom} * \Pi_{x} \widehat\tau)(x).
\end{equs}
One recovers for $ \bar x  = 0 $ the formula given in \cite[Lemma 6.10]{BHZ}.
\end{proof}

If we take $\bar x = x $, we obtain a simple formula for $ f_{x}^{( \bar x)} $:
\begin{equs}
	f_x^{(x)}( \CJ_{(\Labhom,k)}(\widehat\tau)) 
	= -  \one_{ |  \CI_{(\Labhom,k)}(\widehat\tau) |_{\s} > 0 }   \left( D^{k} K_{\Labhom} * \Pi_{x} \widehat\tau \right)(x).
\end{equs}
 When $ \bar x = 0 $, which corresponds to the case considered in \cite{reg,BHZ}, one can fix $ x=0 $ and obtains 
\begin{equs}
	f_0( \CJ_{(\Labhom,k)}(\widehat\tau)) 
	= -  \one_{ |  \CI_{(\Labhom,k)}(\widehat\tau) |_{\s} > 0 } \left( D^{k} K_{\Labhom} * \Pi_{0} \widehat\tau \right)(0).
\end{equs}
If we consider only this case, some simplifications can be introduced in the algebraic structures. 
 The  Bogoliubov recursion given in \eqref{Bogoliubovrecursion} is the same except that now one can replace the map $ \Deltap $ by changing its value on polynomials:
\begin{equs}
\Deltap X_i = X_i \otimes \one, 
\end{equs}
This will also change the definition of the space $ \bar \CT^{\CD}_+ $ which will no longer contain the polynomials. One gets the following  definition:
\begin{equs} 
	\bar \CT^{\CD}_{+} 
	=   \Big\lbrace  \prod_{i=1}^{n}  \CJ_{(\Labhom_i,p_i)}(\widehat\tau_i), 
	| \CI_{(\Labhom_i,p_i)}(\widehat\tau_i) |_{\s} > 0, \widehat\tau_i \in \bar\CT^{\CD} \Big\rbrace . 
\end{equs}

Then the coaction $  \hat{ \Delta}^{\!+} : \bar\CT^{\CD} \rightarrow \bar\CT^{\CD} \otimes \bar\CT^{\CD}_+ $ is given by:
\begin{equs} \label{coaction_s} 
\begin{aligned}
  	\hat{ \Delta}^{\!+} X_i 
	&  = X_i \otimes \one,   \\
 	\hat{ \Delta}^{\!+} \CI_{(\Labhom,p)}(\widehat\tau)
 	&  =  \left( \CI_{(\Labhom,p)} \otimes \id \right)  \hat{ \Delta}^{\!+} \widehat\tau 
 	+ \sum_{\ell \in \N^{d+1}} \frac{X^{\ell}}{\ell !} \otimes \CJ_{(\Labhom,p + \ell)}(\widehat\tau)
 \end{aligned} 
 \end{equs}
We keep the structure of the deformation in this definition, but now $ X_i $ is not primitive. The main change occurs for the coproduct $ \bar{ \Delta}^{\!+} $ where all the deformation at the edge adjacent to the root is removed. 
 \begin{equs} \label{coproduct_s}
	 \bar{ \Delta}^{\!+} \CJ_{(\Labhom,p)}(\widehat \tau)
	 &  =  \left( \CJ_{(\Labhom,p)} \otimes \id \right)  \hat{ \Delta}^{\!+} \widehat\tau 
	 	+ \one \otimes \CJ_{(\Labhom,p)}(\widehat\tau) .
 \end{equs}
 Moreover, the Hopf algebra $ \bar \CT^{\CD}_+ $ is now connected and one gets
 \begin{equs} \label{antipose_s}
 	\CA_+ X_i &  = - X_i, 
	\quad 
	\CA_+ \CJ_{(\Labhom, p)}(\widehat\tau) 
		= -   \mathcal{M} \left( \CJ_{(\Labhom, p )}  \otimes \CA_+ \right)  \bar{ \Delta}^{\!+} \widehat\tau.
\end{equs}
Even the twisted antipode $ \tilde{\CA}_+ $ is simplified in this context:
\begin{equs} \label{twisted_antipose_s}
	\tilde\CA_+ \CJ_{(\Labhom,k)}(\widehat\tau) = 
 	- \hat \CM_+ \bigl( \hat \CJ_{(\Labhom,k)} \otimes \tilde\CA_+\bigr) \hat{ \Delta}^{\!+} \widehat\tau.
\end{equs}
In certain cases, it will also coincide with the antipode. Indeed, one can have:
\begin{equs}
	\hat{ \Delta}^{\!+} \widehat\tau 
	= \sum_{(\widehat \tau)} \widehat\tau' \otimes \widehat\tau'', 
	\quad  
	| \CI_{(\Labhom,k)}(\widehat\tau')|_{\s} \geq 0.
\end{equs}
In fact, this case is almost the general case. Indeed, it happens for branched rough paths, generalised KPZ equations and on every equation when the space $ \bar\CT^{\CD}_{\text{eq}} $ is a positive sector, that is, a subspace invariant by $ G $ and where the minimum degree is $0$. For a negative sector, under certain conditions one can remove the negative part with a generalised Da Prato--Debussche trick see \cite[Section 5]{BCCH} and work on a positive sector.  We can reformulate \eqref{Bogoliubovrecursion} in the following proposition:
\begin{proposition} \label{simplification_1}
 The Definition~\ref{Bogoliubov} becomes for $ x=0 $
\begin{equs} \label{BogoliubovrecursionV2} 
\begin{aligned}
	\bar \varphi (\widehat\tau)(y) 
	& = \varphi(\widehat\tau)(y) 
	+ \sideset{}{\!^{+}}\sum_{(\widehat\tau)} \varphi(\widehat\tau')(y) \varphi^{-}(\widehat\tau'') (y) \\
	\varphi^{-}(\widehat\tau)(y) 
	& = - \mathrm{T}_{|\widehat \tau|_{\s},0,y } \left(  \bar \varphi (\widehat\tau) \right) \\
	\varphi^{+}(\bigcdot)(y) 
	& = \varphi(\bigcdot)(y) \star \varphi^{-} (\bigcdot)(0) 
		= \left(   \varphi(\bigcdot)(y) \otimes \varphi^{-}(\bigcdot)(0) \right) \hat \Delta^{\! +}   
\end{aligned} 
\end{equs}
where
\begin{equs}
	\varphi = \PPi, 
	\quad 
	\varphi^{+} = \Pi_0 , 
	\quad 
	\varphi^-(\bigcdot)(x) = f_{x}.
\end{equs}
\end{proposition}
\begin{proof}
The simplification comes from the fact that terms of the form 
\begin{equs}\sigma \otimes X^{\ell} \tau
\end{equs}
are set to be zero when one applies $ \left(   \varphi(\bigcdot)(y) \otimes \varphi^{-}(\bigcdot)(0) \right) $ to them. Indeed, $\varphi^{-}(X^{\ell})(0)$ is equal to zero.
\end{proof} 
Note the shift in notation due to the simplifications implied by the choice of parameters in the previous proposition. 
Then, for $ x=0 $, the character $ \varphi^{-} := \varphi^{-}(\bigcdot)(0) $ is given by
\begin{equs}
	\varphi^{-}(\widehat\tau)(0) 
	& = - \mathrm{T}_{|\widehat\tau|_{\s},0,0} \left(  \bar \varphi (\widehat\tau) \right) 
	= - \text{ev}_0 \left( \bar \varphi (\widehat\tau)  \right),
\end{equs}
where $ \text{ev}_x $ is the evaluation at the point $ x $.
These simplifications have been observed in \cite[Remark 2.11]{BS} and make sense when one can just look at $ \Pi_0 $ for the convergence of the model. This is true for a random model whose law is   invariant by translation, in the sense that $ \Pi_x(\widehat\tau) $ and $ \Pi_0(\widehat\tau) $ have the same law. 


\begin{remark}
The construction relies on $ \PPi  $ being a character. When in Regularity Structures, we renormalise characters with a suitable map $ M : \bar\CT^{\CD} \rightarrow \bar\CT^{\CD} $, we construct $ \PPi^{M} $ as a character on $ \bar\CT^{\CD}_+ $ but on $ \bar\CT^{\CD} $ it will not possible: we need to renormalise ill-defined distributional product and so the multiplicativity will be lost.
Therefore, the construction is still valid for $ \PPi^{M} $ viewed as a character on $ \bar\CT^{\CD}_+ $. Indeed, $ \PPi^{M} $ can be extended in the following way:
\begin{equs}
 \PPi^{M} X^k \prod_{i} \mathcal{J}_{(\Labhom_i,k_i)}(\tau_i) := \left( \PPi^{M} X^k \right) \prod_{i} \left( \PPi^{M} \mathcal{I}_{(\Labhom_i,k_i)}(\tau_i) \right)
\end{equs}
 Therefore, $ \varphi_- $ will still be a character but not  $ \varphi_{+} $. We will expand this construction in the next section.
\end{remark}

\subsection{Negative Renormalisation}

 Let $ \hat\CT_-^{\CD} $ be the free commutative algebra generated by $ \bar \CT^{\CD} $. We denote by $ \bigcdot $ the forest product associated to this algebra. The empty forest is given by $  \one_1 $. Elements of $ \hat\CT_-^{\CD} $ are of the form  $ (F,\Labn,\Labe) $ where $ F $ is a forest.
 The forest product is defined by:
 \begin{equs}
 	(F,\Labn,\Labe) \bigcdot (G,\bar \Labn,\bar \Labe)  
 	= (F \bigcdot G,\bar \Labn + \Labn,\bar \Labe + \Labe),
 \end{equs} 
where the sums $ \bar \Labn + \Labn $ and  $ \bar \Labe + \Labe $ mean that decorations defined on one of the forests are extended to the disjoint union by setting them to vanish on the other forest.
 Then we set $ \bar \CT_{-}^{\CD} = \hat \CT_{-}^{\CD} / \CB^{\CD}_{+} $ where $  \CB^{\CD}_{+}  $ is the ideal of $ \hat \CT^{\CD}_- $ generated by 
$ \lbrace \widehat\tau \in B_{\circ} \; : \; | \widehat\tau |_{\s}\geq 0 \rbrace $. Then, one defines in \cite{BHZ},  a coaction $ \hat{ \Delta}^{\!-} :  \bar\CT^{\CD} \rightarrow \bar\CT^{\CD}_- \otimes \bar\CT^{\CD} $ which is a deformation of an extraction-contraction coproduct in the same spirit as $ \hat{ \Delta}^{\!+} $. By multiplicativity, one extends it to a coaction $ \hat{ \Delta}^{\!-}: \hat \CT^{\CD}_- \rightarrow \bar\CT^{\CD}_- \otimes \hat \CT^{\CD}_- $.

Then one can turn this map into a coproduct $ \bar{ \Delta}^{\!-} : \bar\CT^{\CD}_-  \rightarrow \bar\CT^{\CD}_{-} \otimes \bar\CT^{\CD}_{-}$ and obtains a connected Hopf algebra for $ \bar\CT^{\CD}_{-}  $ endowed with this coproduct and the forest product see \cite[Prop. 5.35]{BHZ}.  The main difference here is that we do not consider extended decorations but the results for the Hopf algebra are the same as in \cite{BHZ}. 
The twisted antipode is given in \cite[Prop. 6.6]{BHZ}

\begin{proposition}
There exists a unique algebra morphism $\tilde\CA_-\colon \bar\CT^{\CD}_- \to \hat\CT^{\CD}_-$, that we call the ``negative twisted antipode'', such that for $\widehat\tau\in\bar\CT^{\CD}_- \cap\ker\one^\star_1$
\begin{equ}[e:pseudoant-]
	\tilde\CA_- \widehat\tau 
	= -\hat\CM_-(\tilde\CA_-\otimes\id)(\hat{ \Delta}^{\!-} 
	\mathfrak{i}_-\widehat\tau
	-\widehat\tau\otimes \one_1),
\end{equ}
where   $  \mathfrak{i}_-$ is the injection from $\bar\CT^{\CD}_-$ to $ \hat\CT^{\CD}_-$ and $ \hat \CM_- $ is the forest product between elements of $ \hat\CT^{\CD}_-$.
\end{proposition}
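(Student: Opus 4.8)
The plan is to reproduce, almost verbatim, the construction of the positive twisted antipode \eqref{e:pseudoant}; indeed the statement is essentially \cite[Prop.~6.6]{BHZ} transposed to the present setting, and since we have only dropped the extended decorations — a feature that plays no role in the Hopf-algebraic argument — I expect everything to carry over with purely notational changes. The structural fact I would invoke at the outset is that, by \cite[Prop.~5.35]{BHZ}, $(\bar\CT^{\CD}_-,\hat\CM_-,\bar{ \Delta}^{\!-},\one_1,\one^\star_1)$ is a \emph{connected} graded Hopf algebra for a grading built from the number of edges of a forest (see \cite[Sec.~5]{BHZ} for the precise bigrading): the only degree-zero forest is the empty forest $\one_1$, because any edgeless forest is a product of single-node trees $X^k$, each of which has $|X^k|_\s\ge 0$ and is hence killed in the quotient $\bar\CT^{\CD}_- = \hat\CT^{\CD}_-/\CB^{\CD}_+$. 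I would also use that $\bar\CT^{\CD}_-$ is free commutative on the trees of $B_\circ$ of strictly negative degree, so that an algebra morphism out of it is determined by its restriction to those generators.

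Uniqueness would then come from unfolding \eqref{e:pseudoant-} on a generator $\widehat\tau$ (a negative-degree tree, hence with at least one edge). Writing $\hat{ \Delta}^{\!-}\mathfrak{i}_-\widehat\tau = \one_1\otimes\mathfrak{i}_-\widehat\tau + \widehat\tau\otimes\one_1 + \sum\widehat\tau^{(1)}\otimes\widehat\tau^{(2)}$, where in the last sum both legs differ from $\one_1$, the extracted subforest $\widehat\tau^{(1)}$ there has strictly fewer edges than $\widehat\tau$. Subtracting $\widehat\tau\otimes\one_1$ and applying $\hat\CM_-(\tilde\CA_-\otimes\id)$, the term $\one_1\otimes\mathfrak{i}_-\widehat\tau$ contributes $\tilde\CA_-(\one_1)\cdot\mathfrak{i}_-\widehat\tau = \mathfrak{i}_-\widehat\tau$, so \eqref{e:pseudoant-} is equivalent to $\tilde\CA_-\widehat\tau = -\mathfrak{i}_-\widehat\tau - \hat\CM_-(\tilde\CA_-\otimes\id)\bigl(\sum\widehat\tau^{(1)}\otimes\widehat\tau^{(2)}\bigr)$, which, together with $\tilde\CA_-\one_1 = \one_1$, pins $\tilde\CA_-$ down on the generators by induction on the grading. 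Here I would flag that the contracted tree $\widehat\tau^{(2)}$ may have non-negative degree, which is exactly why the codomain must be the full forest algebra $\hat\CT^{\CD}_-$ and not $\bar\CT^{\CD}_-$.

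For existence I would simply turn this around: \emph{define} $\tilde\CA_-$ on generators by that recursion, put $\tilde\CA_-\one_1 = \one_1$, and extend multiplicatively to $\bar\CT^{\CD}_-$. What then has to be checked — the only genuinely substantial step — is that the extended map still satisfies \eqref{e:pseudoant-} on all of $\bar\CT^{\CD}_-\cap\ker\one^\star_1$, that is, also on products; equivalently, that $\tilde\CA_-$ is a left convolution inverse of the inclusion $\mathfrak{i}_-$ relative to the convolution $f\star g = \hat\CM_-(f\otimes g)\hat{ \Delta}^{\!-}\mathfrak{i}_-$. I would establish this by the classical argument that shows the antipode of a commutative connected graded Hopf algebra is an algebra morphism: using that $\hat{ \Delta}^{\!-}$ is an algebra morphism for the forest product (componentwise on the tensor square), that $\hat\CM_-$ is commutative and associative, and an induction on the grading comparing, for $\widehat\tau_1\cdot\widehat\tau_2$, the two sides of \eqref{e:pseudoant-} once the trivial tensor components on each factor are peeled off.

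The step I expect to be the main obstacle is exactly this last bookkeeping. Because $\hat{ \Delta}^{\!-}$ extracts on the \emph{left} — so $\tilde\CA_-$ sits in the first tensor slot — and because the reduced coaction entering \eqref{e:pseudoant-} removes only the term $\widehat\tau\otimes\one_1$ while keeping $\one_1\otimes\mathfrak{i}_-\widehat\tau$, one has to track these two asymmetric ``trivial'' contributions carefully when multiplying two such reduced coactions; the deformation terms (the polynomial decorations $X^\ell$) do not affect the edge grading but do swell the sums to be manipulated. Since this computation is identical to that in \cite[Prop.~6.6]{BHZ}, the absence of extended decorations only making the ambient spaces smaller, I would in the end refer to that proof for the detailed verification rather than reproduce it.
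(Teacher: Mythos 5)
Your proposal matches the paper's own treatment, which states this proposition without proof as a direct transcription of \cite[Prop.~6.6]{BHZ}, remarking only that dropping the extended decorations leaves the Hopf-algebraic argument unchanged; your reconstruction of the skeleton (connectedness of $\bar\CT^{\CD}_-$ because every $X^k$ lies in $\CB^{\CD}_+$, uniqueness by induction on generators, existence by multiplicative extension and verification on products) is the same argument. The one caveat is that the induction must run on the bigrading of \cite{BHZ} rather than the raw edge count, since the reduced coaction retains terms $\widehat\tau^{(1)}\otimes X^k$ in which $\widehat\tau^{(1)}$ carries the full edge set of $\widehat\tau$ and only the node decorations have decreased --- a point you effectively cover by deferring to the precise bigrading.
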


\begin{remark} 
The formalism described here corresponds to the one of  Remark~\ref{twisted_antipodeb} where $ H = \bar\CT^{\CD}_-$ and $ \hat H = \hat\CT^{\CD}_-$. The fact that we get a connected Hopf algebra and that we do not see any polynomials in the reduced coaction comes from the definition of $ \CB^{\CD}_+ $ which contains all the polynomials $ X^n $. Therefore, after quotienting by $ \CB_+^{\CD} $, they do not belong to 
$ \bar\CT^{\CD}_- $.
\end{remark}

It remains to describe the target space where the Birkhoff factorisation takes place.
We denote by $ \mathfrak{X} $ the space of stationary processes $ X : \Omega \rightarrow \CC^{\infty} $ over an underlying probability space $ (\Omega, \CF, P) $. This means that the laws of $ X(z) $ and $ X(0) $ are equal when $ X \in \mathfrak{X} $ and $ z \in \R^{d+1} $.  We assume that derivatives of elements in $ \mathfrak{X} $, computed at $0$
have moments of all orders. We define $ \mathfrak{N} = S(\mathfrak{X}) $ as the  symmetrised tensors over  the linear span of $ \mathfrak{X} $. Any element of $ S(\mathfrak{X}) $ is of the form 
\begin{equs}
f_1 \odot \cdots \odot f_n : =\frac{1}{n!} \sum_{\sigma \in \mathfrak{G}_n} f_{\sigma(1)} \otimes \cdots \otimes f_{\sigma(n)}
\end{equs}
where the $ f_i $ belong to $ \mathfrak{X} $.
We consider the following splitting:
\begin{equs}
	\mathfrak{N} = \mathfrak{N}_{-} \oplus  \mathfrak{N}_{+},
\end{equs}
where $ \mathfrak{N}_{-}  $ is the space of constants and $ \mathfrak{N}_{+} $ is the subspace of $ \mathfrak{N}  $ such that each element $ F $ satisfies:
    \begin{equs}
    \tilde{\mathbb{E}}(F)(0) = 0,
    \end{equs}
where $ \tilde{\mathbb{E}} : S(\mathfrak{X}) \rightarrow \R $ is defined on $ f_1 \odot \cdots \odot f_n$ by
 \begin{equs}
	 \tilde{\mathbb{E}}( f_1 \odot \cdots \odot f_n )(0) 
	 = \prod_{i=1}^{n} \mathbb{E}(f_i)(0)
 \end{equs}
 and $ \mathbb{E} $ denotes the expectation over the underlying probability space. Then we consider the characters $ \psi  : \hat \CT_-^{\CD} \rightarrow \mathfrak{N} $, $  \psi_{+} : \hat \CT_-^{\CD} \rightarrow \mathfrak{N} $ and  $  \psi_{-} : \bar\CT^\CD_{-} \rightarrow \mathfrak{N}_{-} $ as 
\begin{equs} \label{def_psi_-}
	\psi = \PPi , 
	\quad 
	\psi_{-} = \tilde{\mathbb{E}} ( \PPi \tilde \CA_{-} \bigcdot  )(0)  , 
	\quad 
	\psi_{+} =  \psi_{-} \star \psi = ( \psi_- \otimes \psi)\hat{ \Delta}^{\!-}  ,
\end{equs}
where $ \PPi $ is extended multiplicatively to
$ \hat \CT_-^{\CD} $. The choice of the point $ 0 $ in the definition of $ \psi_- $ is not arbitrary since we suppose that $ \PPi $ takes values in $ \mathfrak{X} $. One can show that $ \psi_+ $ is taking values in $ \mathfrak{N}_+ $ see \cite[Theorem 6.18]{BHZ}. 
\begin{theorem}
 For $ \widehat\tau \in \hat \CT_-^{\CD} $ and $ \widehat\tau_1  \in \bar\CT^{\CD}_- $, one can defined the same character $ \psi_- $ in \eqref{def_psi_-} as:
\begin{equs} \label{Birkhoff_negative} 
\begin{aligned}
	\psi_-(\widehat\tau_1 )  
	& =  - \tilde{\mathbb{E}}\left( \bar \psi( \mathfrak{i}_-(\widehat\tau_1 )) \right)(0), 
	\quad 
	\psi_+(\widehat\tau)  
	   = \left( \id  - \tilde{\mathbb{E}} \right) \left( \bar \psi(\widehat\tau) \right)(0) \\
	\bar \psi(\widehat\tau) 
	& =  \psi(\widehat\tau) +\sideset{}{^-} \sum_{(\widehat\tau)} \psi_{-}(\widehat\tau') \psi(\widehat\tau''). 
\end{aligned}
\end{equs}
where the Sweedler's notation is used for the modified reduced coaction:
\begin{equs}
	\Delta^{\! -}_{\scriptscriptstyle{\mathrm{red}}} \widehat\tau 
	= \sideset{}{^-}\sum_{(\widehat\tau)} \widehat{\tau}' \otimes \widehat{\tau}''.
\end{equs} 
\end{theorem}
\begin{proof}
 One can easily check that $  \tilde{\mathbb{E}}  $ is a Rota--Baxter map in the sense of the Remark~\ref{multiplicative} and then apply Proposition~\ref{Birkhoff_comodule}.
\end{proof}
\begin{remark}
The character $ \psi_{-} $ plays a central role in the definition of the renormalised model $ \hat \Pi_x $ which is given in \cite[Section 6.3]{BHZ} by:
\begin{equs}
	\hat \Pi_x = \left( \PPi M \otimes f_x M \right) \hat{ \Delta}^{\!+}, 
	\quad 
	M = \left( \psi_- \otimes \id \right) \hat{ \Delta}^{\!-}.
\end{equs}
The fact that this definition gives again a model relies on the cointeraction (see \cite[Theorem 5.37]{BHZ}) between the two Hopf algebras $ \hat\CT^{\CD}_+ $ and $ \bar\CT^{\CD}_- $ obtained when one adds extended decorations. This cointeraction has been observed on similar structures without any decorations in \cite{CA}.

\end{remark}

\end{document}